\newtheorem*{BH}{Bergelson-Hindman Theorem}
\newtheorem*{Gr}{Gr\"{u}nwald}
\newtheorem*{FW}{Multiple Birkhoff Recurrence Theorem}
\newtheorem*{ope-que}{Question}
\newtheorem{thm}{Theorem}[section]
\newtheorem{con}[thm]{Conjecture}
\newtheorem{lem}[thm]{Lemma}
\newtheorem{cor}[thm]{Corollary}
\newtheorem{cor-1.1}{Corollary~1.1.\!}
\newtheorem{cor-1.2}{Corollary~1.2.\!}
\newtheorem{cor-4.1}{Corollary}
\newtheorem{prop}[thm]{Proposition}
\newtheorem{que}[thm]{Question}
\theoremstyle{definition}
\newtheorem{defn}[thm]{Definition}
\newtheorem{deff}{Basic Notion}
\newtheorem{note1.2}{Note~\ref{thm1.2}.\!}
\newtheorem*{note}{Note}
\newtheorem*{sn}{Standing notation}
\numberwithin{equation}{section}
\newcommand{\e}{\boldsymbol{o}}
\journal{IJM}
\begin{document}

\begin{frontmatter}
\title{Gr\"{u}nwald version of van der Waerden's theorem for semi-modules}
\author{Xiongping Dai}
\ead{xpdai@nju.edu.cn}
%%\cortext[cor1]{Corresponding author}
\address{Department of Mathematics, Nanjing University, Nanjing 210093, People's Republic of China}

\begin{abstract}
Let $(M,\pmb{+})$ be any semi-module over a semi-ring $(R,+,\cdot)$ with a finite coloring $M=B_1\cup\dotsm\cup B_q$.
By establishing a Regional Multiple Recurrence Theorem for semi-modules, we prove that one of the colors $j$ has the property that if $F\subseteq M$ is any finite set, then one can find some ``syndetic'' subset $D_F$ of $(R,+)$ such that for each $d\in D_F$ there is some $a\in B_j$ with $a\pmb{+}dF\subseteq B_j$. This in turn implies that each uniformly almost periodic point is multiply almost periodic.
\end{abstract}

\begin{keyword}
Van der Waerden theorem $\cdot$ Gr\"{u}nwald theorem $\cdot$ Multiple recurrence

\medskip
\MSC[2010] 11B25 $\cdot$ 37B20 $\cdot$ 05E15
\end{keyword}
\end{frontmatter}

%%% ----------------------------------------------------------------------
%%% ----------------------------------------------------------------------
%\tableofcontents
%%%%%%%%%%%%%%%%%%%%%%%%%%%%%%%%%%%%%%%%%%
%%%%%%%%%%%%%%%%%%%%%%%%%%%%%%%%%%%%%%%%%%
\section{Introduction}\label{sec1}%%%
\subsection{Van der Waerden theorems}\label{sec1.1}%%%
B.\;L.\,Van der Waerden's Theorem, conjectured by Baudet and proved in 1927, states (in one of several equivalent formulations) that if $\mathbb{N}=\{1,2,3,\dotsc\}$ is partitioned into finitely many sets, say $\mathbb{N}=B_1\cup\dotsm\cup B_q$, then one of these sets $B_j$ contains arithmetic progressions of arbitrary finite length (cf.~\cite{Wae,Fur}).

Since for any finite set $F\subset\mathbb{N}$ there is some $l\ge1$ such that $F\subseteq\{1,2,\dotsc,l\}$, hence van der Waerden's theorem is equivalent to the $1$-dimensional case of Gr\"{u}nwald's Theorem:

\begin{Gr}[\cite{Rad, Fur}]
Let $\mathbb{N}^m=B_1\cup\dotsm\cup B_q$ be an arbitrary finite partition of the $m$-dimensional positive lattice $\mathbb{N}^m$, where $1\le m<\infty$. Then one of the sets $B_j$ has the property that if $F\subset\mathbb{N}^m$ is any finite set, then $B_j$ contains a translate of a dilation of $F$: $c+bF\subset B_j$ where $c\in\mathbb{N}^m,b\in\mathbb{N}$.
\end{Gr}

Many extensions of Gr\"{u}nwald's theorem have been made since Furstenberg 1981; see, e.g., \cite{BL, BFM} for polynomial extensions of $b\in\mathbb{N}$. Another direction is for extensions of $\mathbb{N}^m$ to semigroups setup.

Let $(G,\pmb{+})$ be any nontrivial additive semigroup; then an analogue of van der Waerden's Theorem holds trivially by van der Waerden's Theorem itself. Indeed, let $g\in G$ be an arbitrary nonzero element, we set $\widehat{\mathbb{N}}=\{ng\,|\, n\in\mathbb{N}\}$ where $ng=g\pmb{+}\dotsm\pmb{+}g$ ($n$ times), and define a homomorphism $\varphi\colon n\mapsto ng$ from $\mathbb{N}$ onto $\widehat{\mathbb{N}}$. If $G=B_1\cup\dotsm\cup B_q$ is any finite partition of $G$ and let $\widehat{B}_j=\{n\,|\,ng\in B_j\}$, then
$\mathbb{N}=\widehat{B}_1\cup\dotsm\cup \widehat{B}_q$
is a finite partition of $\mathbb{N}$ so that some $\widehat{B}_j $ contains $(l+1)$-length arithmetic progressions $\{a,a+d,\dotsc,a+ld\}$ for all $l\ge1$ by van der Waerden's Theorem. Via $\varphi$ this implies that
$B_j$ contains arithmetic progressions $\{x,x\pmb{+}y,\dotsc,x\pmb{+}ly\}$ where $x=ag$ and $y=dg$ of every finite length $l+1$ as well.

However to such a version of van der Waerden's theorem there is no guarantee in general that the ``common difference'' $y$ of $\{x,x\pmb{+}y,\dotsc,x\pmb{+}ly\}$ will not be the zero element of the additive semigroup $(G,\pmb{+})$. To avoid such triviality, using the Stone-\v{C}ech compactification of discrete semigroup and ultrafilter methods, as a result of their Central Sets Theorem, Bergelson and Hindman in 1992 proved a strengthened version of van der Waerden's theorem as follows:

\begin{BH}[{\cite[Corollary~3.2]{BH}}]
Let $(G,\pmb{+})$ be an abelian cancelable semigroup, let $\langle d_m\rangle_1^\infty$ be a sequence in $G$ with $d_m\not=d_n$ for $m\not=n$, and let $G=B_1\cup\dotsm\cup B_q$. Then there exists some $B_j$ such that to any $l\in\mathbb{N}$, one can find $a\in G$ and $d\in \textrm{FS}(\langle d_m\rangle_1^\infty)$ with $d\not=0$ such that $a, a\pmb{+}d, \dotsc,a\pmb{+}ld\in B_j$. Here $\textrm{FS}(\langle d_m\rangle_1^\infty)=\{d_{n_1}\pmb{+}\dotsm\pmb{+}d_{n_k}\,|\,1\le n_1<\dotsm<n_k<\infty, k\ge1\}$ is the IP-set generated by $\langle d_m\rangle_1^\infty$.
\end{BH}

Since an abelian semigroup $(G,\pmb{+})$ is just a semi-module over the semi-ring $(\mathbb{N},+,\cdot)$ or $(\mathbb{Z}_+,+,\cdot)$, we will further generalize Gr\"{u}nwald's version from $\mathbb{Z}$ or $\mathbb{N}$ to any semi-modules in this paper.

\medskip
Recall that by a \textit{semi-ring} $(R,+,\cdot)$, it means a nonempty set $R$, together with two laws of composition called \textit{addition} $+$ and \textit{multiplication} $\cdot$ respectively, satisfying the following axioms:
\begin{itemize}
\item $(R,+)$ is an \textit{abelian} semigroup with zero element $0$;\footnote{If $(R,+,\cdot)$ itself does not have the zero element then we will adjoint $0$ to it by letting $0+t=t+0=t$ and $0\cdot t=t\cdot 0=0$ for all $t\in R$, and further consider $(R\cup\{0\},+,\cdot)$ instead of $(R,+,\cdot)$.}
\item $(R,\cdot)$ is a semigroup (not necessarily commutative) with \textit{unit} element $1$, which is \textit{associative}:
$(x\cdot y)\cdot z=x\cdot(y\cdot z)\; \forall x,y,z\in R$ and which is such that $0\cdot x=0\ \forall x\in R$;

\item $(R,+,\cdot)$ is \textit{distributive}:
\begin{gather*} (x+y)\cdot z=x\cdot z+y\cdot z\quad\textrm{and}\quad z\cdot(x+y)=z\cdot x+z\cdot y\end{gather*}
for all $x,y,z\in R$.
\end{itemize}

\begin{deff}\label{deff1}%%%
A subset $S$ of a semi-ring $(R,+,\cdot)$ is called ``syndetic'' or ``relatively dense'' in $(R,+)$ if one can find a finite set $K\subseteq R$ such that
\begin{gather*}
(K+t)\cap S\not=\emptyset\quad \forall t\in R.
\end{gather*}
See, e.g., \cite{Fur, CD}. It is different with requiring $R=K+S$ by \cite[Definition~2.02]{GH}.
\end{deff}

Since here $R$ is provided with the discrete topology, so the notion of syndetic is the strongest one.

\begin{deff}%%%
Let $(R,+,\cdot)$ be a semi-ring. As usual a \textit{semi-module over $R$} or an \textit{$R$-semimodule} $(M,\pmb{+})$ is an abelian semigroup with zero element $\e$, usually written additively, together with a scalar multiplication $(t,g)\mapsto tg$ of $R$ on $M$ such that
\begin{gather*}
(r+t)g=rg\pmb{+}tg\quad \textrm{and}\quad r(g\pmb{+}h)=rg\pmb{+}rh\quad \forall r,t\in R, g,h\in M
\end{gather*}
and
\begin{gather*}
1g=g\quad\textrm{and}\quad 0g=\e\quad\forall g\in M.
\end{gather*}
See \cite{Lan}.
In a similar way, one can define a \textit{right} $R$-semimodule via $(g,t)\mapsto gt$ from $M\times R$ to $M$. We shall deal only with left $R$-semimodules, unless specified otherwise.
\end{deff}

Clearly, $(\mathbb{R}^m,\pmb{+})$ is an $(\mathbb{R},+,\cdot)$-module, $(\mathbb{Z}_+^m,\pmb{+})$ is a $(\mathbb{Z}_+,+,\cdot)$-semimodule,
$(\mathbb{Z}_p^m,\pmb{+})$ is a module over the $p$-adic integer ring $(\mathbb{Z}_p,+,\cdot)$, and $(\mathbb{Q}_p^m,\pmb{+})$ is a module over the $p$-adic number field $(\mathbb{Q}_p,+,\cdot)$; cf., e.g.,~\cite{Lan}.

In the present paper we will mainly prove the following more general generalization of Gr\"{u}nwald's Theorem.

\begin{thm}\label{thm1.1}%%%
Let $M=B_1\cup\dotsm\cup B_q$ be an arbitrary finite partition of a semi-module $(M,\pmb{+})$ over a semi-ring $(R,+,\cdot)$. Then one of the sets $B_j$ has the property that if $F\subseteq M$ is any finite set, then one can find a syndetic subset $D_F$ of $(R,+)$ such that for each $d\in D_F$ there is an $a\in B_j$ with $a\pmb{+}dF\subseteq B_j$.
\end{thm}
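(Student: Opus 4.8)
The plan is to translate the partition statement into a recurrence statement in a symbolic system and then feed it to a multiple-recurrence theorem for the acting semiring. First I would set up the system: put on $\Omega=\{1,\dots,q\}^{G}$ the product topology, so $\Omega$ is compact, and let $(G,\pmb{+})$ act on $\Omega$ by the shifts $(S_{g}\omega)(h)=\omega(g\pmb{+}h)$, which are continuous and satisfy $S_{g_{1}\pmb{+}g_{2}}=S_{g_{1}}S_{g_{2}}$ and $S_{\e}=\mathrm{id}$. Encode the colouring as $\xi\in\Omega$, where $\xi(g)=j\iff g\in B_{j}$, and let $X=\overline{\{S_{g}\xi:g\in G\}}$, a compact $G$-invariant set. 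By Zorn's Lemma choose a minimal (nonempty, closed, $G$-invariant) set $M\subseteq X$; then $\overline{\{S_{g}x:g\in G\}}=M$ for every $x\in M$. Fix any $\eta\in M$, put $j:=\eta(\e)$, and let $U:=\{x\in M:x(\e)=j\}$, a nonempty clopen subset of $M$. I claim this colour $j$ works in the theorem.

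The heart of the matter is a \emph{Regional Multiple Recurrence Theorem}. Given a finite $F=\{f_{1},\dots,f_{k}\}\subseteq G$, form $Z:=M^{k+1}$, its diagonal $\Delta:=\{(x,\dots,x):x\in M\}$, and for each $d\in R$ the map $\Sigma_{d}:=\mathrm{id}_{M}\times S_{df_{1}}\times\dots\times S_{df_{k}}\colon Z\to Z$. Since $(r+r')f=rf\pmb{+}r'f$ in the semimodule, $d\mapsto\Sigma_{d}$ is a homomorphism from $(R,+)$ into the continuous selfmaps of $Z$, so $(Z,\{\Sigma_{d}\}_{d\in R})$ is a bona fide $(R,+)$-system, and each $\Sigma_{d}$ commutes with the diagonal $G$-action $g\mapsto S_{g}\times\dots\times S_{g}$. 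The Regional Multiple Recurrence Theorem asserts: \emph{the set of $x\in M$ for which the diagonal point $(x,\dots,x)$ is uniformly recurrent under the $(R,+)$-action $\{\Sigma_{d}\}_{d\in R}$ is dense in $M$.} In particular this set meets $U$, so we may fix $x^{\ast}\in U$ whose diagonal point $z^{\ast}:=(x^{\ast},\dots,x^{\ast})$ has minimal orbit closure $\mathcal{M}:=\overline{\{\Sigma_{d}z^{\ast}:d\in R\}}$.

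Granting this, the theorem drops out. By minimality of $\mathcal{M}$ the orbit of every point of $\mathcal{M}$ is dense and hence meets the nonempty relatively open set $U^{k+1}\cap\mathcal{M}$, so $\mathcal{M}\subseteq\bigcup_{d\in R}\Sigma_{d}^{-1}(U^{k+1})$; compactness of $\mathcal{M}$ then yields a finite $K\subseteq R$ with $\mathcal{M}\subseteq\bigcup_{\kappa\in K}\Sigma_{\kappa}^{-1}(U^{k+1})$. Thus for every $t\in R$, since $\Sigma_{t}z^{\ast}\in\mathcal{M}$, some $\kappa\in K$ satisfies $\Sigma_{\kappa+t}z^{\ast}=\Sigma_{\kappa}\Sigma_{t}z^{\ast}\in U^{k+1}$, so $D_{F}:=\{d\in R:\Sigma_{d}z^{\ast}\in U^{k+1}\}$ is syndetic in $(R,+)$. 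For $d\in D_{F}$ we read off $x^{\ast}(\e)=j$ and $x^{\ast}(df_{i})=(S_{df_{i}}x^{\ast})(\e)=j$ for $1\le i\le k$; since $x^{\ast}\in M\subseteq\overline{\{S_{g}\xi:g\in G\}}$, some $S_{g}\xi$ agrees with $x^{\ast}$ on the finite set $\{\e\}\cup dF$, which gives $\xi(g)=j$ and $\xi(g\pmb{+}df_{i})=j$ for all $i$, that is, $a:=g\in B_{j}$ and $a\pmb{+}dF\subseteq B_{j}$, exactly the assertion.

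The genuine work — and what I expect to be the main obstacle — is the Regional Multiple Recurrence Theorem itself. The natural route is a Furstenberg--Weiss type induction on $k=|F|$ inside $(Z,\{\Sigma_{d}\})$, using minimality of $M$ under the $G$-action (this, via compactness, is what manufactures the finite ``gap set'' $K$ and forces syndeticity) together with the commuting diagonal $G$-action; cf.~\cite{Fur}. The difficulty is that the classical induction reduces from $k$ directions to $k-1$ by passing from $S_{df_{i}}$ to $S_{df_{i}}S_{df_{k}}^{-1}=S_{d(f_{i}-f_{k})}$, and a semimodule over a semiring admits no subtraction; symmetrically, the idempotent-ultrafilter approach — take a minimal idempotent $p$ of $(\beta R,+)$ and observe $\Sigma_{p}(x,\dots,x)=(x,\phi_{f_{1}}(p)x,\dots,\phi_{f_{k}}(p)x)$, where each $\phi_{f_{i}}\colon r\mapsto rf_{i}$ extends to a homomorphism $\beta R\to\beta G$ and each $\phi_{f_{i}}(p)$ is an idempotent of $\beta G$ — then must produce a point $x$ of $M$ fixed simultaneously by $\phi_{f_{1}}(p),\dots,\phi_{f_{k}}(p)$, again without subtraction. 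It is precisely to get around this that one proves recurrence of the whole diagonal \emph{region} under the full $(R,+)$-action (which is what yields syndeticity in $(R,+)$ directly), rather than ordinary multiple recurrence for a single iterated transformation; making that go through is the crux of the argument.
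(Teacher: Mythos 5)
Your architecture matches the paper's in outline: encode the colouring as a point $\xi$ of the symbolic system $\{1,\dots,q\}^{G}$ with the shift $G$-action, pass to a minimal set $M$, pick a colour $j$ via a clopen cell $U$, invoke a regional multiple recurrence theorem for the $(R,+)$-action $\Sigma_d=\mathrm{id}\times S_{df_1}\times\dots\times S_{df_k}$, and then decode back to the partition via Furstenberg correspondence. (The paper routes this through the auxiliary notion of a ``weak central set'' in Lemmas~3.8 and 3.9, but substantively it is the same reduction.)

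The genuine gap is in what you call the Regional Multiple Recurrence Theorem, and it is not merely that you have not proved it: the statement you have given is not the right one, and is in fact too strong to be true in this setting. You assert that \emph{the set of $x\in M$ for which $(x,\dots,x)$ is uniformly recurrent under $\{\Sigma_d\}_{d\in R}$ is dense in $M$}, and you then fix one such $x^{\ast}$ and use minimality of $\overline{\{\Sigma_d z^{\ast}\}}$ to get a single syndetic $D_F$. That is a \emph{pointwise} multiple uniform recurrence claim. The paper's Theorem~1.2 is strictly weaker and genuinely \emph{regional}: for a minimal system and any nonempty open $U$, the set $N_{T_1,\dots,T_l}(U)=\{t: U\cap\varphi^{-tT_1}[U]\cap\dots\cap\varphi^{-tT_l}[U]\neq\varnothing\}$ is syndetic — here the witness point in $U$ is allowed to vary with $t$. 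The paper is explicit that your stronger version fails in general: it remarks that ``there does not need to exist multiply uniformly recurrent points in general,'' that ``the multiple returning time set of a multiply recurrent point is not syndetic in general,'' and (citing Balcar--Kal\'a\v{s}ek) that in the non-metrizable setting even a multiply \emph{recurrent} point may fail to exist. What the paper's Lemma~2.1 actually establishes, via Ellis enveloping semigroup theory, is that the set $\varSigma=\overline{\bigcup_{t\in R}\xi^t[\Delta]}$ is minimal for the \emph{combined} $R\times\langle T_1,\dots,T_n\rangle_R$-action $\mathfrak{T}$, not for the $R$-action alone; syndeticity in $R$ is then extracted by projecting the $\mathfrak{T}$-return-time set to the first coordinate. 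Minimality for $\mathfrak{T}$ does not give minimality of the $\{\Sigma_d\}$-orbit closure of the diagonal point, which is what your argument requires.

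The good news is that the regional version suffices for Theorem~1.1 with a small rewiring of your last step: for each $d\in D_F$ you get some $z_d\in U^{k+1}\cap\Sigma_d^{-1}(U^{k+1})$ depending on $d$, approximate its first coordinate by some $S_{a_d}\xi$ on $\{\e\}\cup dF$, and obtain $a_d\in B_j$ with $a_d\pmb{+}dF\subseteq B_j$; the theorem only asks for ``for each $d\in D_F$ there is an $a\in B_j$,'' so $a$ depending on $d$ is fine. But the load-bearing content — proving the regional recurrence for $R$-semimodules over a compact Hausdorff (not metric) space — is exactly the part you have deferred, and you have deferred it in a form that cannot be delivered. The paper delivers the correct form via the Ellis idempotent argument in Lemma~2.1, which is different in kind from either the Furstenberg--Weiss induction or the ``fix a common idempotent'' ultrafilter route you sketch (both of which, as you rightly observe, stall on the lack of subtraction).
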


\begin{note}
If $M$ is a right $R$-semimodule and we require $a\pmb{+}Fd\subseteq B_j$ instead of $a\pmb{+}dF\subseteq B_j$, then the statement holds as well.
In addition, Theorem~\ref{thm1.1} has a ``finitary formulation''; see Theorem~\ref{thm3.9} in $\S\ref{sec3.2}$ below.
\end{note}

This theorem is not subsumed by the above theorem of Bergelson and Hindman, because here every $R$-semimodule $(M,\pmb{+})$ does not need to have a sequence $\langle d_m\rangle$ as in Bergelson and Hindman's statement satisfying that for any finite subset $F$ of $M$ and some ``syndetic'' $d\in R$,
\begin{gather*}
a\pmb{+}dF\subseteq\{a,a\pmb{+}d^\prime,\dotsc,a\pmb{+}ld^\prime\}
\end{gather*}
for some $a\in B_j, d^\prime\in\textsl{FS}(\langle d_m\rangle)$ and $l\ge1$.

A direct consequence of Theorem~\ref{thm1.1} is the following theorem of van der Waerden type for some canonical modules:

\begin{cor-1.1}
Let $M=\mathbb{R}^m$ (resp.~$\mathbb{Q}^m,\mathbb{Z}_p^m,\mathbb{Q}_p^m$) and $B_1\cup\dotsm\cup B_q$ be any finite partition of $M$. Then one of the sets $B_j$ has the property that if $F$ is a finite subset of $M$, then there are two elements $a\in B_j$ and $d\in R=\mathbb{R}$ (resp.~$\mathbb{Q},\mathbb{Z}_p,\mathbb{Q}_p$) with $d\not=0$ such that $a\pmb{+}dF\subset B_j$.
\end{cor-1.1}
\noindent
Here $\mathbb{R}$ is the real field, $\mathbb{Q}$ the rational field, $\mathbb{Z}_p$ the $p$-adic integer ring and $\mathbb{Q}_p$ the $p$-adic number field. More generally, from Theorem~\ref{thm1.1} follows

\begin{cor-1.1}
Let $G=B_1\cup\dotsm\cup B_q$ be any finite partition of an abelian semigroup $(G,\pmb{+})$. Then one of the sets $B_j$ has the property that if $F$ is a finite subset of $G$ one can find a syndetic subset $D_F$ of $\mathbb{Z}_+$ such that for each $d\in D_F$ there is an $a\in B_j$ with $a\pmb{+}dF\subseteq B_j$.
\end{cor-1.1}

Following the framework of \cite{BFK}, to prove Theorem~\ref{thm1.1} we will need to prove a regional multiple recurrence theorem (cf.~Theorem~\ref{thm1.2} below). Theorem~\ref{thm1.1} and the later Theorem~\ref{thm1.2} are in fact equivalent to each other by the following

\begin{cor-1.1}\label{cor1.1-3}%%%
Let $(M,\pmb{+})$ be a semi-module over a semi-ring $(R,+,\cdot)$, and let $\varphi\colon M\times X\rightarrow X$ be a discrete semiflow on an arbitrary set $X$; that is to say,
\begin{gather*}
\varphi(\e,x)=x,\ \varphi(f\pmb{+}g,x)=\varphi(f,\varphi(g,x))\quad \forall f,g\in M\textrm{ and }x\in X.
\end{gather*}
Then for any finite partition of $X=X_1\cup\dotsm\cup X_q$, there exists a cell $X_\alpha$ such that for all elements $T_1,\dotsc,T_l\in M$ one can find a syndetic set $D$ of $(R,+)$ with
\begin{gather*}
X_\alpha\cap\varphi^{-dT_1}[X_\alpha]\cap\dotsm\cap\varphi^{-dT_l}[X_\alpha]\not=\emptyset\quad \forall d\in D.
\end{gather*}
Here $\varphi^{-g}[A]=\{x\in X\,|\,\varphi(g,x)\in A\}$ for all $g\in M$ and $A\subseteq X$.
\end{cor-1.1}

\begin{proof}
We pick any point $x\in X$ and let $B_j=\{g\in M\,|\,\varphi(g,x)\in X_j\}$ for all $j=1,2,\dotsc,q$. Then $M=B_1\cup\dotsm\cup B_q$ is a finite partition of $M$, and by Theorem~\ref{thm1.1} it follows that some cell $B_\alpha$ satisfies that to any finite set $F=\{\e,T_1,\dotsc,T_l\}\subset M$, there is a syndetic set $D=D_F$ of $(R,+)$ so that for all $d\in D$, one can find some $a\in M$ with $a\pmb{+}dF\subseteq B_\alpha$.
This implies that
\begin{gather*}
\varphi(a,x)\in X_\alpha\cap\varphi^{-dT_1}[X_\alpha]\cap\dotsm\cap\varphi^{-dT_l}[X_\alpha]
\end{gather*}
as desired. The proof is completed.
\end{proof}

The opposite implication ``Theorem~\ref{thm1.2} $\Rightarrow$ Theorem~\ref{thm1.1}'' will be obtained by using Furstenberg's correspondence principle (cf. Proof of Theorem~\ref{thm1.1} in $\S\ref{sec3.2}$).

\subsection{Topological multiple recurrence theorems}\label{sec1.2}%%%
Let $X$ be any topological space and $(M,\pmb{+})$ a semi-module over a semi-ring $(R,+,\cdot)$; whenever the action mapping $\varphi\colon M\times X\rightarrow X$ of $M$ from the left on $X$
is such that:
\begin{itemize}
\item $\varphi(\e,x)=x\ \forall x\in X$, i.e., $\varphi(\e,\centerdot)=i_X$;
\item $\varphi(g,\centerdot)\colon X\rightarrow X$, for all $g\in M$, is continuous; and

\item $\varphi(g\pmb{+}h,x)=\varphi(g,\varphi(h,x))$, i.e. $\varphi^{g\pmb{+}h}=\varphi^g\circ\varphi^h$ if we agree to write $\varphi^g(x)$ in place of $\varphi(g,x)$, for all $g,h\in M$;\footnote{Since we have required $(M,\pmb{+})$ abelian, hence now $\varphi(g\pmb{+}h,x)=\varphi(h\pmb{+}g,x)$, i.e., $\varphi^h\varphi^g=\varphi^g\varphi^h$. However, when $(M,\pmb{+})$ is a non-abelian discrete semigroup, then $\varphi^h\varphi^g\not=\varphi^g\varphi^h$ in general.}
\end{itemize}
then we shall call $\varphi\colon M\times X\rightarrow X$ a \textit{semiflow} on $X$, denoted by $(\varphi,M,X)$. Clearly, there is no use of the commutativity of $M$ for this notation itself.

\begin{sn}
Let $(\varphi,M,X)$ be a semiflow. Given any point $x\in X$, $M_\varphi[x]=\varphi(M,x)$ is called the \textit{orbit} of the motion $\varphi(\centerdot,x)$; write $M[x]$ if no confusion. For $g\in M$, $\varphi^{-g}\colon X\rightarrow X$ will be defined as the inverse map $\left(\varphi^g\right)^{-1}$, possibly multivalent, of the $g$-sample map $\varphi(g,\centerdot)\colon X\rightarrow X$. Of course, if $(M,\pmb{+})$ is a group, then the $-g$ is just the inverse element of $g$ in $\varphi^{-g}$ here.

$(\varphi,M,X)$ is \textit{minimal} if and only if there does not exist a nonempty proper closed subset $Y$ of $X$ such that $M_\varphi[Y]\subseteq Y$. Similarly one can define \textit{minimal subset} of $(\varphi,M,X)$.

An $x\in X$ is an \textit{almost periodic} (a.p.) \textit{point} of $(\varphi,M,X)$ if and only if $\mathrm{cls}_XM_\varphi[x]$ is a minimal subset of $X$. This can be extended to any semigroup $M$ acting on a compact Hausdorff space.
\end{sn}

We can then obtain the following ``Regional Multiple Recurrence Theorem'' which generalizes and strengthens the classical theorem of Furstenberg and Weiss 1978~\cite[Theorem~1.5]{FW} that is for the invertible case of $M=\mathbb{Z}^m$ over the canonical ring $(\mathbb{Z},+,\cdot)$ where $M$ acts on a compact metric space. However, our phase space $X$ is not necessarily metrizable here. This point is important for our subsequent applications.

\begin{thm}\label{thm1.2}%%%
Given any semi-module $(M,\pmb{+})$ over a semi-ring $(R,+,\cdot)$, let $(\varphi,M,X)$ be a minimal semiflow on a compact Hausdorff space $X$. Then for all $T_1,\dotsc,T_l\in M$ and any open subset $U$ of $X$, $U\not=\emptyset$, the multiple hitting-time set of $U$ with itself,
\begin{gather*}
N_{T_1,\dotsc,T_l}(U):=\left\{t\in R\,|\,U\cap\varphi^{-tT_1}[U]\cap\dotsm\cap\varphi^{-tT_l}[U]\not=\emptyset\right\},
\end{gather*}
is syndetic in $(R,+)$.
\end{thm}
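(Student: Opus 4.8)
\emph{Reformulation and the case $l=1$.} By distributivity of the scalar action, $(r+s)T_i=rT_i\pmb{+}sT_i$, so each $r\mapsto\varphi^{rT_i}$ is an action of the abelian semigroup $(R,+)$ on $X$ by continuous maps, and since $(G,\pmb{+})$ is commutative all these maps commute with one another and with every $\varphi^{g}$. The plan for $l=1$ is a direct argument: by Zorn's lemma pick a nonempty closed $Z\subseteq X$ minimal for the sub-semigroup $\{\varphi^{rT_1}:r\in R\}$ and pick $z\in Z$; since $(X,G)$ is minimal there is $g_{0}\in G$ with $\varphi^{g_{0}}z\in U$, and since $z$ is uniformly recurrent for $\{\varphi^{rT_1}\}$, the set $\{r\in R:\varphi^{rT_1}z\in Z\cap\varphi^{-g_{0}}[U]\}$ is syndetic in $(R,+)$; for each such $r$, $x:=\varphi^{g_{0}}z\in U$ and $\varphi^{rT_1}x=\varphi^{g_{0}}\varphi^{rT_1}z\in U$, so $r\in N_{T_1}(U)$. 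Hence $N_{T_1}(U)$ is syndetic.

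\emph{The general case.} I would work on $Y=X^{\,l+1}$. Put $\Delta=\{(x,\dots,x):x\in X\}$, and for $g\in G,r\in R$ set $\delta^{g}=\varphi^{g}\times\dots\times\varphi^{g}$ and $\Psi^{r}=\mathrm{id}_X\times\varphi^{rT_1}\times\dots\times\varphi^{rT_l}$; these generate an abelian semigroup $\mathcal{G}$ of continuous self-maps of $Y$, every element of which is of the form $\delta^{g}\Psi^{r}$, and $(\Delta,\{\delta^{g}\})$ is a copy of the minimal system $(X,G)$. Writing $\widehat U=U\times\dots\times U\subseteq Y$, one checks that $r\in N_{T_1,\dots,T_l}(U)$ iff $\Psi^{r}(\Delta\cap\widehat U)\cap\widehat U\neq\varnothing$. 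The central claim is then: \emph{some $\mathcal{G}$-minimal closed subset $Y_{0}$ of $Y$ meets $\Delta$}. Granting this, fix $\bar y_{*}=(y_{*},\dots,y_{*})\in Y_{0}\cap\Delta$; since $(X,G)$ is minimal, $\widehat U\cap Y_{0}\neq\varnothing$, so by minimality and compactness of $Y_{0}$ there are finitely many $\gamma_{k}=\delta^{g_{k}}\Psi^{r_{k}}$ with $Y_{0}=\bigcup_{k}\gamma_{k}^{-1}[\widehat U]$. For an arbitrary $s\in R$, $\Psi^{s}\bar y_{*}\in Y_{0}$ lies in some $\gamma_{k}^{-1}[\widehat U]$, i.e.\ $\delta^{g_{k}}\Psi^{r_{k}+s}\bar y_{*}\in\widehat U$; using commutativity, with $x:=\varphi^{g_{k}}y_{*}$ this says $x\in U$ and $\varphi^{(r_{k}+s)T_i}x\in U$ for all $i$, so $s+r_{k}\in N_{T_1,\dots,T_l}(U)$. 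Thus $(s+K)\cap N_{T_1,\dots,T_l}(U)\neq\varnothing$ for $K=\{r_{1},\dots,r_{n}\}$ and every $s\in R$, which is precisely the asserted syndeticity. The central claim itself I would prove by induction on $l$ via the Furstenberg--Weiss ``colour-focusing'' scheme: the inductive step produces, for every nonempty open $W\subseteq X$, a nonempty open $W'\subseteq W$ and a (syndetic) set of $r$ with $\varphi^{rT_i}[W']\subseteq W$ for $i=1,\dots,l$, by feeding the $(l-1)$-case to suitably derived transformations and then extracting a monochromatic configuration from the finite colouring of $R$ induced by a finite cover of $X$ by $G$-translates of $W$; a nested-intersection and compactness argument then yields a recurrent point of $\Delta$.

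\emph{Main obstacle.} The real work is the central claim, and two features absent from the classical Furstenberg--Weiss setting make it delicate. First, $(G,\pmb{+})$ is only a semigroup, so there is no ``$-T_l$'' and the classical reduction to the transformations $T_iT_l^{-1}$ is unavailable; I would circumvent this by running the focusing argument inside the Ellis (enveloping) semigroup $E(X,G)$, whose minimal left ideals carry idempotents that play the role of the missing inverses when composed appropriately — equivalently one can first pass to a minimal subsystem of the universal invertible extension of $(X,G)$ and transport the conclusion back, using only that minimality already forces every $\varphi^{g}$ to be surjective. Second, $X$ is merely compact Hausdorff, so every metric/$\varepsilon$ step must be recast with finite open covers and convergent nets, with the bookkeeping kept uniform enough that the syndeticity constants remain finite along the induction. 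Everything outside the central claim — the reformulation, the base case, and the finite-cover transport yielding syndeticity — is formal.
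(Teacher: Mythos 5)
Your overall architecture is the same as the paper's: pass to a product space carrying the diagonal $G$-action together with the ``staggered'' $R$-action $\Psi^{r}$, observe that the theorem reduces to the assertion that the $\mathcal{G}$-orbit closure of the diagonal contains a $\mathcal{G}$-minimal set meeting $\Delta$, and then extract syndeticity by the finite-subcover argument. That last reduction (from the central claim to syndeticity) is carried out correctly, as is the $l=1$ warm-up. So far this matches the strategy of Theorem~\ref{thm2.2}.

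The gap is that the central claim itself is never proved, and the method you propose for it is precisely the one the paper argues is \emph{not} available here. You suggest proving the claim ``by induction on $l$ via the Furstenberg--Weiss colour-focusing scheme,'' i.e.\ building a nested sequence of open sets and extracting a multiply recurrent diagonal point by compactness. In a compact Hausdorff (non-metrizable) space there need not exist a pointwise multiply recurrent point for a family of commuting maps at all --- the paper explicitly cites Balcar--Kal\'{a}\v{s}ek for counterexamples --- so the nested-intersection step of the colour-focusing argument cannot be made to produce the required point, and the ``bookkeeping with finite open covers'' you allude to does not repair this. Your secondary suggestion, ``run the focusing argument inside the Ellis enveloping semigroup,'' points in the right direction but is not an argument: the paper does use the Ellis semigroup, but not to simulate colour-focusing. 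Its Lemma~\ref{lem2.1} proves the central claim directly by the idempotent-juggling argument of Ellis/Glasner: take a minimal idempotent $u$ of $\mathrm{E}(\varSigma,\hat{\mathbb{T}})$ fixing the chosen diagonal point, take a minimal idempotent $v$ of $\mathrm{E}(\varSigma,\mathfrak{T})$ in the left ideal $\mathrm{E}(\varSigma,\mathfrak{T})u$, and then use the coordinate projections $\pi_i^{*}$ (together with the crucial identity $RT_i\pmb{+}\langle T_1,\dots,T_n\rangle_R=\langle T_1,\dots,T_n\rangle_R$, which makes $\mathrm{E}(X,\mathfrak{T})$ equal to $\mathrm{E}(\langle T_1,\dots,T_n\rangle_R\curvearrowright_\varphi X)$ on each factor) to show $vu=v$ and $uv=u$, hence $u$ is already $\mathfrak{T}$-minimal. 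That argument is the missing content of your proof; without it, or a worked-out substitute, the ``main obstacle'' you flag remains unresolved.

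One further small point: the paper first reduces, via Zorn's lemma, to the case where the sub-semimodule $\langle T_1,\dots,T_n\rangle_R$ already acts minimally on $X$, so that the diagonal is minimal for the \emph{restricted} diagonal action $\theta$; Lemma~\ref{lem2.1} is stated under exactly that hypothesis. You skip this reduction and work with the full $G$-diagonal action $\delta$. That may in fact be harmless (the identity $RT_i\pmb{+}G=G$ still holds), but since you have not supplied the minimality lemma at all, whether your variant of it holds without the reduction is left unexamined.
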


\begin{note1.2}
The statement still holds if instead $M$ is a right semi-module by defining the hitting-time set as follows:
\begin{gather*}
N_{T_1,\dotsc,T_l}(U):=\left\{t\in R\,|\,U\cap\varphi^{-T_1t}[U]\cap\dotsm\cap\varphi^{-T_lt}[U]\not=\emptyset\right\},
\end{gather*}
for all $T_1,\dotsc,T_l\in M$ and any nonempty open subset $U$ of $X$.
\end{note1.2}

\begin{note1.2}
In 1989~\cite{BPT} Blaszczyk et al. proved that if $M$ is an abelian semigroup (i.e. $M$ is a semi-module over $(\mathbb{Z}_+,+,\cdot)$) and it acts minimally on $X$, then for any non-empty open set $U$ of $X$ and $T_1,\dotsc,T_l\in M$ there exists an integer $n\ge1$ with $U\cap T_1^{-n}[U]\cap\dotsm\cap T_l^{-n}[U]\not=\emptyset$. Also see \cite[Theorem~6.14]{EEN} for actions of infinite abelian group by entirely different approaches.
\end{note1.2}

\begin{note1.2}
Moreover, the ``syndeticity'' of $N_{T_1,\dotsc,T_l}(U)$ did not appear in the original work of Furstenberg and Weiss in 1978 nor in the work of Blaszczyk et al. in 1989 for $\mathbb{Z}^m$ over $(\mathbb{Z},+,\cdot)$ (cf.~\cite[Theorem~1.5]{FW}, \cite[Theorem~1.56]{Gla}, and \cite[Theorem~1 and Corollary~2]{BPT}) nor in the work of Ellis et al. \cite{EEN}.
\end{note1.2}

\begin{note1.2}
Theorem~\ref{thm1.2} is never a consequence of the Furstenberg-Katznelson Multiple Poincar\'{e} Recurrence Theorem (\cite{FK, Fur}); this is because if $(R,+,\cdot)$ is uncountable, then we cannot reduce a Baire probability space $(X,\mathcal{B}a(X),\mu)$ to a standard Borel probability space by a factor map (to employ the disintegration of measures).
\end{note1.2}

\begin{note1.2}
If $M$ is not abelian, then the Regional Multiple Recurrence Theorem need not hold. For example, take $X=[0,1]$, $T_1x=x/2$, $T_2x=(1+x)/2$ (cf.~\cite[p.~40]{Fur}).
\end{note1.2}

Here for any $t\in R$ and any open set $U\subset X$, we have that
\begin{gather*}
U\cap\varphi^{-tT_1}[U]\cap\dotsm\cap\varphi^{-tT_l}[U]\not=\emptyset\Leftrightarrow \exists\, x_0\in U\textit{ s.t. } \varphi(tT_i,x_0)\in U\textrm{ for all }i=1,\dotsc,l.
\end{gather*}

In this paper Theorem~\ref{thm1.1} is a consequence of Theorem~\ref{thm1.2}. However, if we start with Theorem~\ref{thm1.1}, then Corollary~1.2.\ref{cor1.2.1} below follows at once from Corollary~1.1.\ref{cor1.1-3}, which in turn implies Theorem~\ref{thm1.2} by a standard homogeneity argument under the minimality hypothesis (see Note~\ref{thm1.2}.\ref{n1.2.6}).

\begin{cor-1.2}[Multiple Recurrence in Open Covers]\label{cor1.2.1}
Given any semi-module $(M,\pmb{+})$ over a semi-ring $(R,+,\cdot)$, let $(\varphi,M,X)$ be a semiflow on a compact Hausdorff space $X$, and let $\mathfrak{U}$ be an open cover of $X$. Then there exists some $U\in\mathfrak{U}$ such that $N_{T_1,\dotsc,T_l}(U)$ is syndetic in $(R,+)$ for all $T_1,\dotsc,T_l\in M$.
\end{cor-1.2}

\begin{proof}
Let $X_0$ be a minimal subset of $(\varphi,M,X)$. Since there is some $U\in\mathfrak{U}$ with $U\cap X_0\not=\emptyset$ and $N_{T_1,\dotsc,T_l}(U\cap X_0)\subseteq N_{T_1,\dotsc,T_l}(U)$, hence by Theorem~\ref{thm1.2} for the subsemiflow $(\varphi,M,X_0)$ it follows that $N_{T_1,\dotsc,T_l}(U)$ is syndetic in $(R,+)$. This proves the corollary.
\end{proof}

\begin{note1.2}\label{n1.2.6}%%%
It turns out that this corollary implies Theorem~\ref{thm1.2} as follows. Let $U$ be an open nonempty subset of $X$. Since $(\varphi,M,X)$ is minimal and $X$ is compact, there are $f_1,\dotsc,f_n\in M$ such that $\varphi^{-f_1}[U]\cup\dotsm\cup\varphi^{-f_n}[U]=X$. Then by Corollary~\ref{thm1.2}.\ref{cor1.2.1}, $N_{T_1,\dotsc,T_l}(V)$ is syndetic in $(R,+)$ for $V=\varphi^{-f_j}[U]$ for some $j=1,\dotsc,n$. Now if $x=\varphi(f_j,y)$ with $y\in V$ and $t\in N_{T_1,\dotsc,T_l}(V)$such that $\varphi(tT_i,y)\in V$ for all $1\le i\le l$, then $\varphi(tT_i,x)=\varphi(tT_i,\varphi(f_j,y))=\varphi^{f_j}(\varphi(tT_i,y))\in U$. This thus shows that $N_{T_1,\dotsc,T_l}(U)\supseteq N_{T_1,\dotsc,T_l}(V)$ is syndetic in $(R,+)$.
\end{note1.2}

Next we will consider another application of Theorem~\ref{thm1.2} to pointwise multiple recurrence. For that we first need to recall and introduce some notions as follows:

\begin{defn}\label{def1.3}%%%
Let $(\varphi,M,X)$ be a semiflow on a compact Hausdorff space $X$, where $(M,\pmb{+})$ be a semi-module over a semi-ring $(R,+,\cdot)$. By $\mathscr{U}_x$ it stands for the neighborhood system of $X$ at $x$, for all $x\in X$.
\begin{enumerate}
\item[(a)] Given $T\in M$, a point $p\in X$ is said to be \textit{almost periodic} (a.p.) for $(\varphi,M,X)$ relative to $T$ iff it an a.p. point of $\varphi_T\colon R\times X\rightarrow X$ by $(t,x)\mapsto tx:=\varphi(tT,x)$; i.e.,
\begin{gather*}
N_T(p,U)=\{t\in R\,|\,tp\in U\}\quad \forall U\in\mathscr{U}_p,
\end{gather*}
is syndetic in $(R,+)$ under the sense of Basic Notion~\ref{deff1}; see, e.g.,~\cite{GH, Fur, Aus, CD}.

\item[(b)] A point $p\in X$ is called \textit{multiply almost periodic} (m.a.p.) for $(\varphi,M,X)$ if it is a.p. uniformly for all $T_1,\dotsc,T_l\in M$ and $l\ge2$; i.e.,
    \begin{gather*}
    N_{T_1,\dotsc,T_l}(p,U)=\left\{t\in R\,|\,\varphi(tT_i,p)\in U,\ i=1,\dotsc, l\right\}\quad \forall U\in\mathscr{U}_p
    \end{gather*}
    is syndetic in $(R,+)$, for all $T_1,\dotsc,T_l\in M$ and $l\ge2$.\footnote{A cyclic system $(T,X)$ is called \textit{multi-minimal} in \cite{KO,CLL} if $(T\times T^2\times\dotsm\times T^n, X^n)$ is minimal for all $n\ge1$. In this case, every point $x\in X$ must be m.a.p. for $(\varphi,\mathbb{Z}_+,X)$ where $\varphi(g,x)=T^gx$. However, it does not need to imply the multi-minimality of $(T,X)$ that every point of $X$ is m.a.p. for $(\varphi,\mathbb{Z}_+,X)$.}

\item[(c)] A point $p$ is called \textit{uniformly almost periodic} (u.a.p.) for $(\varphi,M,X)$ if the orbit closure $\mathrm{cls}_X^{}M_\varphi[p]$ is minimal for $(\varphi,M,X)$ and $\{\varphi(T,\centerdot)\colon X\rightarrow X\}_{T\in M}$ is an equicontinuous family restricted to $\mathrm{cls}_X^{}M_\varphi[p]$ (cf.~\cite[Definition~V8.01, Theorem~V8.05]{NS} for $M=\mathbb{R}$).

\item[(d)] A point $p\in X$ is said to be \textit{multiply syndetically nonwandering} for $(\varphi,M,X)$ if for any $T_1,\dotsc,T_l\in M$ and any $U\in\mathscr{U}_p$, $N_{T_1,\dotsc,T_l}(U)$ is syndetic in $(R,+)$. This concept is stronger than the regionally recurrent point of $(\varphi,M,X)$ defined by Gottschalk and Hedlund~\cite[Remark~7.12]{GH}.
\end{enumerate}
\end{defn}

Although there does not need to exist m.a.p. points in general, yet we can easily obtain the following two statements from Theorem~\ref{thm1.2}.

\begin{cor-1.2}\label{cor-1.2-2}%%%
If $p\in X$ is an u.a.p. point of $(\varphi,M,X)$, then it is m.a.p. for $(\varphi,M,X)$.
\end{cor-1.2}

\begin{proof}
Write $Y=\mathrm{cls}_X^{}M_\varphi[p]$ and then the subsemiflow $(\varphi,M,Y)$ is minimal and equicontinuous. Let $U\in\mathscr{U}_p$ be arbitrarily given. By Theorem~\ref{thm1.2}, for all $V\in\mathscr{U}_p$ and any $T_1,\dotsc,T_l\in M$, $N_{T_1,\dotsc,T_l}(V)$ is syndetic in $(R,+)$.

Since $\varphi\colon M\times Y\rightarrow Y$ is equicontinuous
and $Y$ is a compact Hausdorff space (so a uniform space), we can take some $V\subset U$ so ``small'' that
if $\varphi(T,y)\in V$ for some $y\in V$ and $T\in M$ then $\varphi(T,x)\in U\ \forall x\in V$.
Now for any $t\in N_{T_1,\dotsc,T_l}(V)$, there is some point $y_t\in V$ with $\varphi(tT_i,y_t)\in V$ simultaneously for $i=1,\dotsc,l$, and thus
$\varphi(tT_i,p)\in U$ simultaneously for $i=1,\dotsc,l$. This shows that $N_{T_1,\dotsc,T_l}(p,U)$ is syndetic in $(R,+)$ and thus $p$ is m.a.p. for $(\varphi,M,X)$.

The proof of Corollary~1.2.\ref{cor-1.2-2} is therefore completed.
\end{proof}

Since every semiflow $(\varphi,M,X)$ has a least invariant closed equivalence relation $S_e$ in $X$ such that the canonical factor $(\varphi,M,X/S_e)$ is an equicontinuous semiflow (cf.~\cite{EG, V}), hence the dynamics of m.a.p. is not rare.

\begin{cor-1.2}\label{cor-1.2-3}%%%
Let $(\varphi,M,X)$ be semiflow on a compact Hausdorff space $X$. Then there always exists a point $p\in X$ at which $(\varphi,M,X)$ is multiply syndetically nonwandering.
\end{cor-1.2}

\begin{proof}
By Zorn's lemma, there exists a closed $\varphi$-invariant subset $X_0$ of $X$ such that the subsemiflow $(\varphi,M,X_0)$ is minimal. Then by Theorem~\ref{thm1.2}, every point $p\in X_0$ is multiply syndetically nonwandering for $(\varphi,M,X)$.
\end{proof}

Notice that in Theorem~\ref{thm1.2} if $(M,\pmb{+})$ is a discrete group and $g\in M$ not equal to $\e$, then $(\mathbb{Z},+)$ can be naturally imbedded in $(M,\pmb{+})$ via $n\mapsto ng$ as a subgroup. So by the classical theorem of Furstenberg and Weiss~\cite[Theorem~1.5]{FW}, we can obtain the multiple recurrence with respect to the element $g$. But importantly, $\{ng\,|\,n\in\mathbb{Z}\}$ is not a syndetic subgroup of $\{tg\,|\,t\in R\}$ in general.
Because of this reason, our Theorem~\ref{thm1.2} is not a consequence of Furstenberg and Weiss~\cite[Theorem~1.5]{FW} generally.

In 1978 Furstenberg and Weiss proved their regional multiple recurrence theorem by using the multiple Birkhoff recurrence theorem and homogeneity. However, that idea is not workable in our situation; this is because there is no applicable pointwise multiple recurrence theorem for commuting maps on a non-metrizable compact Hausdorff space and moreover the multiple returning time set of a multiply recurrent point is not syndetic in general.

Blaszczyk et al. in 1989~\cite{BPT} gave a topological proof of the topological multidimensional van der Waerden theorem by using induction and the associated inverse system.

However, Theorem~\ref{thm1.2} will be proved in $\S2$ following the nice idea of R.~Ellis by using his enveloping semigroup theory. Then based on Theorem~\ref{thm1.2} together with a dynamics concept---weak central set---introduced later, we can show Theorem~\ref{thm1.1} in $\S3$ using the idea of Bergelson, Furstenberg, Hindman and Katznelson 1989~\cite{BFK}. We will end $\S\ref{sec3}$ with a closely related open question for our further study.

\medskip
Finally the author is deeply grateful to Professor Hillel Furstenberg for his many enthusiastic helps and encouragements.

%%%%%%%%%%%%%%%%%%%%%%%%%%%%%%%%%%%%%%%%%%%%%%%%%%%%%%%%%%%%%%%%%%%%%
\section{The regional multiple recurrence theorem}\label{sec2}%%%
In this section we will prove Theorem~\ref{thm1.2} stated in $\S\ref{sec1.2}$ following the framework of \cite{BFK}, also see \cite[$\S1.11$]{Gla}, completely different with \cite{FW, BPT}.

Henceforth let $(R,+,\cdot)$ be a semi-ring and $(M,\pmb{+})$
a $R$-semimodule defined as in $\S\ref{sec1.1}$; and let
$\varphi\colon M\times X\rightarrow X$, or denoted $(\varphi,M,X)$, be a semiflow on a compact Hausdorff space $X$ as in $\S\ref{sec1.2}$.
If we identify $(M,\pmb{+})$ with the transition semigroup $(\{\varphi(g,\centerdot)\}_{g\in M},\circ)$ via
$g\mapsto\varphi(g,\centerdot)$ and $g\pmb{+}h\mapsto\varphi^g\circ\varphi^h$,
then $X$ becomes a compact Hausdorff $M$-space.

\subsection{Subactions in topological dynamics}
In preparation for proving Theorem~\ref{thm1.2}, we need to introduce a result (Theorem~\ref{thm-p2.4} below which itself is of interest) on subactions in topological dynamics.

Let $\textrm{\large E}(G,X)$ be the Ellis enveloping semigroup of a semiflow $(G,X)$ with phase semigroup $G$ and with phase mapping $(g,x)\mapsto g(x)$, which is the closure of $G$ in $X^X$ provided with the topology of pointwise convergence (cf.~\cite{Ell60,E69,Fur,Aus}). An element $u\in\textrm{\large E}(G,X)$ is called a \textit{minimal idempotent} in $\textrm{\large E}(G,X)$ if and only if there is a minimal left ideal $I$ in $\textrm{\large E}(G,X)$ such that $u^2=u$ and $u\in I$.

Let $I$ be a minimal left ideal in $\textrm{\large E}(G,X)$. Then $x\in X$ is an a.p. point of $(G,X)$ iff there is an idempotent $u\in I$ with $ux=x$; moreover, $Ix$ is a $G$-minimal subset of $X$ for all $x\in X$. See, e.g., \cite{Ell60,E69,Aus}.

\begin{lem}\label{lem-p1}
Let $(G,X)$ be a semiflow, $S$ a subsemigroup of $G$ such that $\textrm{\large E}(G,X)=\textrm{\large E}(S,X)$, and $x\in X$. Then $x$ is an a.p. point of $(S,X)$ if and only if $x$ is an a.p. point of $(G,X)$. In this case $\overline{Sx}=\overline{Gx}$.
\end{lem}

\begin{proof}
Let $\mathbb{I}$ be a minimal left ideal in $\textrm{\large E}(G,X)$. Then $\mathbb{I}$ is also a minimal left ideal in $\textrm{\large E}(S,X)$. It is a well-known fact that $x$ is a.p. if and only if $x\in\mathbb{I}x$ and moreover $\mathbb{I}x$ is exactly the orbit-closure of $x$ when $x$ is an a.p. point. Hence Lemma~\ref{lem-p1} holds.
\end{proof}

In our later applications, it is hard to verify the condition $\textrm{\large E}(G,X)=\textrm{\large E}(S,X)$ required by Lemma~\ref{lem-p1}. In fact, from the above proof, we can easily obtain the following weaker, but convenient, result.

\begin{lem}\label{lem-p2}
Let $(G,X)$ be a semiflow and $S$ a subsemigroup of $G$. If there is a minimal left ideal $\mathbb{I}$ in $\textrm{\large E}(S,X)$ such that each idempotent in $\mathbb{I}$ is a minimal idempotent in $\textrm{\large E}(G,X)$, then every a.p. point of $(S,X)$ is an a.p. point of $(G,X)$.
\end{lem}

\begin{proof}
Let $x$ be an a.p. point of $(S,X)$. Then there is an idempotent $u\in\mathbb{I}$ with $ux=x$. Since $u$ is also a minimal idempotent in $\textrm{\large E}(G,X)$, $x$ is an a.p. point of $(G,X)$.
\end{proof}

We notice that $S\subset G$ implies that $\textrm{\large E}(S,X)\subseteq\textrm{\large E}(G,X)$. However, since $S$ need not be a left ideal in $G$, hence $\textrm{\large E}(S,X)$ is not necessarily an invariant subset of $\textrm{\large E}(G,X)$ so that a minimal left ideal in $\textrm{\large E}(S,X)$ need not be a minimal left ideal in $\textrm{\large E}(G,X)$.

Next we can provide with a sufficient condition for the hypothesis required by Lemma~\ref{lem-p2}, whose proof is mainly motivated by an argument due to R.~Ellis.

\begin{lem}\label{lem-p2.3}%%%
Let $G$ be a semigroup and $S$ a subsemigroup of $G$. Assume $\{(G,X_i)\,|\,i\in\Theta\}$ is a family of semiflows such that $\textrm{\large E}(G,X_i)=\textrm{\large E}(S,X_i)$ for all $i\in\Theta$, and $X=\prod_{i\in\Theta}X_i$. Then every minimal left ideal $I$ in $\textrm{\large E}(S,X)$ is such that if $u\in I$ is an idempotent $u$ is also a minimal idempotent in $\textrm{\large E}(G,X)$.
\end{lem}

\begin{proof}
Let $\pi_i\colon X\rightarrow X_i$, $\pi_{i,*}\colon \textrm{\large E}(G,X)\rightarrow\textrm{\large E}(G,X_i)$, and $\pi_{i,*}\colon\textrm{\large E}(S,X)\rightarrow\textrm{\large E}(S,X_i)$ all be the canonical projections, for all $i\in\Theta$.
Let $I$ be a minimal left ideal in $\textrm{\large E}(S,X)$ with an idempotent $u\in I$. Then $R_u\colon p\mapsto pu$ is continuous from $\textrm{\large E}(G,X)$ to itself.

Since $\textrm{\large E}(G,X)u$ is a closed left ideal in $\textrm{\large E}(G,X)$, we can choose by Zorn's lemma a minimal left ideal $K$ in $\textrm{\large E}(G,X)$ with $K\subseteq\textrm{\large E}(G,X)u$. Let $v$ be an idempotent in $K$. Clearly $vu=v$, this is because $v=fu$ for some $f\in\textrm{\large E}(G,X)$ and so $vu=fu^2=fu=v$. Next we proceed to showing that $uv=u$.

Indeed, for each $i\in\Theta$, we simply set $u_i=\pi_{i,*}(u)\in\textrm{\large E}(G,X_i)$ and $v_i=\pi_{i,*}(v)\in\textrm{\large E}(S,X_i)$. As $u=(u_i\,|\,i\in\Theta)$ and $v=(v_i\,|\,i\in\Theta)$ (cf., e.g.,~\cite[Proposition~3.9]{E69}), to prove $uv=u$, it suffices to show that $u_iv_i=u_i$ for all $i\in\Theta$.

Let $I_i=\pi_{i,*}(I)$ and $K_i=\pi_{i,*}(K)$, which both are minimal left ideals in $\textrm{\large E}(G,X_i)=\textrm{\large E}(S,X_i)$ such that $u_i\in I_i$ and $v_i\in K_i$, for all $i\in\Theta$. Since $\pi_{i,*}$ is a semigroup homomorphism, hence $v_iu_i=v_i, K_iv_i=K_i$ and $I_iu_i=I_i$. In particular, by $v_iI_i\subseteq I_i$ for $v_iI_i\subseteq K_iI_i\subseteq\textrm{\large E}(G,X_i)u_iI_i=\textrm{\large E}(S,X_i)u_iI_i\subseteq I_i$, it follows that $v_i=v_iu_i\in I_i$ and then $v_i\in I_i\cap K_i$ so that $I_i=K_i$. This implies that $u_iv_i=u_i$ for all $i\in\Theta$ and thus $uv=u$ as desired.

Therefore by $uv\in K$, we see $u\in K$. So $u$ is a minimal idempotent in $\textrm{\large E}(G,X)$. This completes the proof of Lemma~\ref{lem-p2.3}.
\end{proof}

It should be mentioned that condition $\textrm{\large E}(G,X_i)=\textrm{\large E}(S,X_i)$ for all $i\in\Theta$ need not imply that $\textrm{\large E}(G,X)=\textrm{\large E}(S,X)$ in the foregoing Lemma~\ref{lem-p2.3}.

\begin{thm}\label{thm-p2.4}%%%
Let $G$ be a semigroup and $S$ a subsemigroup of $G$. Suppose $\{(G,X_i)\,|\,i\in\Theta\}$ is a family of semiflows with $\textrm{\large E}(G,X_i)=\textrm{\large E}(S,X_i)$ for all $i\in\Theta$ and $X=\prod_{i\in\Theta}X_i$. Then every a.p. point of $(S,X)$ is an a.p. point of $(G,X)$.
\end{thm}

\begin{proof}
By Lemma~\ref{lem-p2.3}, it follows that every minimal left ideal $I$ in $\textrm{\large E}(S,X)$ is such that if $u\in I$ is an idempotent $u$ is also a minimal idempotent in $\textrm{\large E}(G,X)$. Then Lemma~\ref{lem-p2}, every a.p. point of $(S,X)$ is an a.p. point of $(G,X)$.
\end{proof}

Let $(\Gamma,X)$ be a flow with phase group $\Gamma$ and let $\widetilde{\Gamma}(X)\subset X^X$ be the group of homeomorphisms of $X$ onto itself defined by $\Gamma$. Then by using almost periodic sets and induction, the special case that $\widetilde{G}(X_i)=\widetilde{S}(X_i)$ for each $i\in\Theta$ has been proved by Auslander and Akin in~\cite[Theorem~3]{AA}.

However, our condition $\textrm{\large E}(G,X_i)=\textrm{\large E}(S,X_i)$ is weaker than $\widetilde{G}(X_i)=\widetilde{S}(X_i)$. For example, let $\pi\colon\mathbb{R}\times X\rightarrow X$ be an effective $C^0$-flow, $G=\mathbb{R}$ and $S=\mathbb{Q}$; then $\textrm{\large E}(G,X)=\textrm{\large E}(S,X)$ but $\mathbb{R}\approx\widetilde{G}(X)\supsetneqq\widetilde{S}(X)\approx\mathbb{Q}$.

Let $I$ be a non-empty index set. The following theorem, as a corollary of Theorem~\ref{thm-p2.4}, is a generalization of a theorem of Glasner~\cite{G}.

\begin{thm}\label{thm2.5}%%%
Let $\varphi\colon \Gamma\times X\rightarrow X$ be a flow with phase group $\Gamma$ and $\Psi=\langle T_i\,|\,i\in I\rangle$ be the subgroup of $\Gamma$ generated by elements $T_i\in \Gamma$ for $i\in I$. If $\hat{x}\in X^I$ is an a.p. point of $\Psi$, then $\hat{x}$ is also an a.p. point of the group $G$ generated by $\hat{T}$ and $\{\hat{g}\,|\,g\in\Psi\}$, where $\hat{T}\colon(x_i)\mapsto(T_ix_i)$ and $\hat{g}\colon (x_i)\mapsto(gx_i)$ are of $X^I$ onto $X^I$.
\end{thm}

\begin{proof}
Let $X_i=X$ for all $i\in I$ and $S=\{\hat{g}\,|\,g\in\Psi\}$. Let $(G,X_i)$ be naturally induced by $(G,X^I)$. Then $S$ is a subgroup of $G$ such that
$\textrm{\large E}(G,X_i)=\textrm{\large E}(S,X_i)$ for all $i\in I$. Whence the statement follows from Theorem~\ref{thm-p2.4}.
\end{proof}

Let $f\colon X\rightarrow X$ be a homeomorphism. If $\Gamma=\{f^i\,|\,i\in\mathbb{Z}\}$ and $T_1=f, T_2=f^2, \dotsc, T_n=f^n$ where $1\le n<\infty$, then the statement of Theorem~\ref{thm2.5} is due to Glasner (also see \cite[Theorem~4]{AA}).

Clearly the foregoing Theorem~\ref{thm2.5} can be stated and proved in the case that $\Gamma$ and $\Psi$ are semigroups.

\subsection{A technical lemma}
Given any index set $\mathcal{I}$, any $T_i\in M$ for $i\in\mathcal{I}$, by $\langle T_i\,|\,i\in\mathcal{I}\rangle_R$ we denote, from here on, the sub-semimodule of $M$, which is generated by $\{T_i\,|\,i\in\mathcal{I}\}$, over $(R,+,\cdot)$.
Next we define, for all $g\in M$, the continuous transformation $\hat{g}$ of $X^\mathcal{I}$ to itself by
\begin{gather*}
\hat{g}\colon X^\mathcal{I}\rightarrow X^\mathcal{I};\quad \hat{x}=(x_i)_{i\in\mathcal{I}}\mapsto\hat{g}(\hat{x})=(g(x_i))_{i\in\mathcal{I}},
\end{gather*}
noting here that we have identified $g$ with $\varphi^g$, i.e. $g(x)=\varphi(g,x)\ \forall x\in X$, based on $(\varphi,M,X)$.
On the other hand, for $\{T_i\,|\,i\in\mathcal{I}\}$, we can define for all $t\in R$
\begin{gather*}
\hat{T}^t\colon X^\mathcal{I}\rightarrow X^\mathcal{I};\quad \hat{x}=(x_i)_{i\in\mathcal{I}}\mapsto\hat{T}^t(\hat{x})=(T_i^{t}(x_i))_{i\in\mathcal{I}},
\end{gather*}
where we have identified $T_i^t$ with $\varphi(tT_i,\centerdot)$ for all $i\in\mathcal{I}$ based on $(\varphi,M,X)$.

It is a well-known fact that for any minimal subset $\Lambda$ of $(\varphi,M,X)$, usually $\Lambda$ need not be minimal for $(\varphi,\langle T_i\,|\,i\in\mathcal{I}\rangle_R,X)$; even nor for the classical case $(R,+,\cdot)=(\mathbb{Z},+,\cdot)$ like $\Lambda$ consists of a periodic orbit.
Because of this reason the following is an important technical lemma for proving Theorem~\ref{thm1.2} later, whose special case that $(M,\pmb{+})=(\mathbb{Z},+), \mathcal{I}=\{1,2,\dotsc,n\}$ and $R=(\mathbb{Z},+,\cdot)$ is~\cite[Proposition~1.55]{Gla}).

\begin{lem}\label{lem2.6}%%%
Given any $T_i\in M$ for $i\in\mathcal{I}$, let $(\varphi,\langle T_i\,|\,i\in\mathcal{I}\rangle_R,X)$ be the semiflow induced by $(\varphi,M,X)$. Define two semiflows on $X^\mathcal{I}$
\begin{gather*}
\hat{T}\colon R\times X^\mathcal{I}\rightarrow X^\mathcal{I}\textrm{ denoted }(\hat{T},R,X^\mathcal{I});\quad (t,\hat{x})\mapsto\hat{T}^t(\hat{x})\\
\hat{\varphi}\colon{\langle T_i\,|\,i\in\mathcal{I}\rangle}_R\times X^\mathcal{I}\rightarrow X^\mathcal{I}\textrm{ denoted }{(\hat{\varphi},\langle T_i\,|\,i\in\mathcal{I}\rangle}_R,X^\mathcal{I});\quad (g,\hat{x})\mapsto\hat{g}(\hat{x})
\intertext{and let}
\mathfrak{T}=\left\{\hat{T}^t\hat{g}\colon X^\mathcal{I}\rightarrow X^\mathcal{I}\,|\,(t,g)\in R\times{\langle T_i\,|\,i\in\mathcal{I}\rangle}_R\right\}.
\end{gather*}
If $\varLambda$ is a $\hat{\varphi}$-minimal subset of $X^\mathcal{I}$ and let
$\varSigma=\textrm{cls}_{X^\mathcal{I}}^{}{\bigcup}_{t\in R}\hat{T}^t[\varLambda]=\textrm{cls}_{X^\mathcal{I}}^{}\hat{T}(R\times\varLambda)$,
then $\varSigma$ is a $\mathfrak{T}$-minimal subset of $X^\mathcal{I}$.
\end{lem}

\begin{proof}
Since for all $i\in\mathcal{I}$ we have $T_i^tg=\varphi(tT_i\pmb{+}g,\centerdot)=gT_i^t\colon X\rightarrow X$ for all $g\in M, t\in R$ and so
\begin{gather*}
(\hat{T}^t\hat{g})(\hat{T}^{t^\prime}{\hat{g^\prime}})=(\hat{T}^{t^\prime}{\hat{g^\prime}})(\hat{T}^t\hat{g})=\hat{T}^{t+t^\prime}\widehat{(g^\prime\pmb{+}g)}\in\mathfrak{T}\quad \forall (t,g), (t^\prime, g^\prime)\in R\times\langle T_i\,|\,i\in\mathcal{I}\rangle_R,
\end{gather*}
then $\mathfrak{T}$ is an abelian multiplicative semigroup of continuous transformations of $X^\mathcal{I}$ and $\varSigma$ is $\mathfrak{T}$-invariant.

Let $G=\mathfrak{T}$ and $S=\{\hat{g}\,|\,g\in{\langle T_i\,|\,i\in\mathcal{I}\rangle}_R\}$.
Then $S$ may be regarded as a subsemigroup of $G$. Define $(G,X_i)$, where $X_i=X$ for all $i\in\mathcal{I}$, by the ways of
$$
\hat{T}^t\hat{g}(x_i)=T_i^tg(x_i)\quad \forall x_i\in X_i.
$$
Because $RT_i\pmb{+}\langle T_i\,|\,i\in\mathcal{I}\rangle_R=\langle T_i\,|\,i\in\mathcal{I}\rangle_R$,
$\textrm{\large E}(G,X_i)=\textrm{\large E}(S,X_i)$ for all $i\in\mathcal{I}$.

Then by Theorem~\ref{thm-p2.4}, it follows that for all $\hat{x}\in\varLambda$, $\overline{G[\hat{x}]}=\varSigma$ is a minimal subset of $(\mathfrak{T},X^\mathcal{I})$.
The proof of Lemma~\ref{lem2.6} is therefore completed.
\end{proof}

We notice that although the statement of Lemma~\ref{lem2.6} still holds without the commutativity of $M$ as Theorem~\ref{thm2.5}, yet the commutativity is for guaranteeing that $\mathfrak{T}$ is of the form $R\times\langle T_i\,|\,i\in\mathcal{I}\rangle_R$. This is an important point in the proof of Theorem~\ref{thm1.2}.

In addition, if $(M,\pmb{+})$ is a right $R$-semimodule, then we need to define $\hat{T}$ in Lemma~\ref{lem2.6} via $T_i^t=\varphi(T_it,\centerdot)$ in place of $T_i^t=\varphi(tT_i,\centerdot)$.

\subsection{Topological multidimensional van der Waerden Theorem}
With Lemma~\ref{lem2.6} at hands, now we can readily prove our regional multiple recurrence theorem for any semi-modules acting on a compact Hausdorff space $X$.

This kind of result of Theorem~\ref{thm1.2} is also called Topological Multidimensional van der Waerden Theorem in the literature; see, e.g., \cite{BKW, BPT}.

\begin{proof}[\textbf{Proof of Theorem~\ref{thm1.2}}]
Let $(\varphi,M,X)$ be a minimal semiflow on the compact Hausdorff space $X$ and let $T_1,\dotsc,T_l\in M$ be any given. Let $U$ be an arbitrary open subset of $X$ with $U\not=\emptyset$.

Let $\mathcal{I}=M$ and $T_i=i$ for all $i\in M$. Then $\langle T_i\,|\,i\in M\rangle_R=M$ so that $(\varphi,\langle T_i\,|\,i\in M\rangle_R,X)$ is minimal.
Let $\hat{T}\colon R\times X^M\rightarrow X^M,\ \hat{\varphi}\colon M\times X^M\rightarrow X^M$ and $\mathfrak{T}=\left\{\hat{T}^t\hat{g}\colon X^M\rightarrow X^M\,|\,(t,g)\in R\times M\right\}$ all be defined as in Lemma~\ref{lem2.6}; that is,
$$
\hat{T}\colon(t,\hat{x})\mapsto\hat{T}^t(\hat{x})=\left(\varphi(ti,x_i)\right)_{i\in M}\quad \textrm{and}\quad \hat{\varphi}\colon(g,\hat{x})\mapsto\hat{g}(\hat{x})=\left(\varphi(g,x_i)\right)_{i\in M}\quad \forall \hat{x}=(x_i)_{i\in M}\in X^M.
$$
Now set
\begin{gather*}
\varLambda=\varDelta_{M}(X)=\left\{(x_i)_{i\in M}\,|\,x_i\equiv x\in X\textrm{ for all }i\in M\right\}\subset X^M\quad{\textrm{and}}\quad \varSigma=\mathrm{cls}_{X^M}^{}{\bigcup}_{t\in R}\hat{T}^t[\varDelta_M(X)].
\end{gather*}
Since $(\varphi,\langle T_i\,|\,i\in M\rangle_R,X)=(\varphi,M,X)$ is minimal by hypothesis, then $\varDelta_M(X)$ is $\hat{\varphi}$-minimal in $X^M$. Hence by Lemma~\ref{lem2.6}, any point
$\hat{x}=(x)_{i\in M}\in\varDelta_M(X)$ with $x\in U$
is an a.p. point of
$$\mathfrak{T}\colon R\times M\times \varSigma\rightarrow \varSigma,$$
that is defined by $((t,g),\hat{z})\mapsto\hat{T}^t\hat{g}(\hat{z})$ for all $(t,g)\in R\times M$ and $\hat{z}\in\varSigma$. Hence for any $i_1,\dotsc, i_l\in M$, the returning-time set
\begin{gather*}
N_{\mathfrak{T}}\left(\hat{x},\widehat{U}_{i_1,\dotsc,i_l}\right)=\left\{(t,g)\in R\times M\,\big{|}\,\hat{T}^t\hat{g}(\hat{x})\in\widehat{U}_{i_1,\dotsc,i_l}\right\},
\end{gather*}
where $\widehat{U}_{i_1,\dotsc,i_l}=\prod_{i\in\mathcal{I}}U_i$ such that $U_{i_k}=U$ for $1\le k\le l$ and $U_i=X$ for $i\not=i_k$ and where $R\times M$ is thought of as a discrete additive semigroup, is syndetic in $(R\times M,+)$; that is, one can find a finite subset $K=\{(s_1,g_1),\dotsc,(s_k,g_k)\}$ of $R\times M$ such that
\begin{gather*}
(K+\tau)\cap N_{\mathfrak{T}}\left(\hat{x},\widehat{U}_{i_1,\dotsc,i_l}\right)\not=\emptyset\quad \forall \tau\in R\times M.
\end{gather*}
Thus for all $(t,g)\in N_{\mathfrak{T}}\left(\hat{x},\widehat{U}_{i_1,\dotsc,i_l}\right)$, we have
$\varphi(ti_k,\varphi(g,x))\in U$ for $k=1,\dotsc,l$.
Therefore,
\begin{gather*}
\varphi(g,x)\in\bigcap_{k=1}^l\varphi^{-ti_k}[U]\not=\emptyset\quad \forall\, (t,g)\in N_{\mathfrak{T}}\left(\hat{x},\widehat{U}_{i_1,\dotsc,i_l}\right)
\end{gather*}
where $\varphi^{-ti_k}={\varphi(ti_k,\centerdot)}^{-1}$. Let
\begin{gather*}
S={\Pr}_1\left[N_{\mathfrak{T}}\left(\hat{x},\widehat{U}_{i_1,\dotsc,i_l}\right)\right],\quad \textrm{where }{\Pr}_1\colon (t,g)\mapsto t\textrm{ of }R\times M\textrm{ onto }R.
\end{gather*}
Then $S\subseteq N_{T_1,\dotsc,T_l}(U)$ and $S$ is syndetic in $(R,+)$ for $i_1=T_1, \dotsc,i_l=T_l\in \mathcal{I}=M$.

This thus completes the proof of Theorem~\ref{thm1.2}.
\end{proof}

Theorem~\ref{thm1.2} implies the classical multiple Birkhoff recurrence theorem of Furstenberg and Weiss~(\cite[Theorem~1.4]{FW} or \cite[Theorem~2.6]{Fur}), if $X$ is a compact metric space.

\begin{FW}[Furstenberg-Weiss 1978; A bi-sided version]
Let $T_1,\dotsc,T_p$ be commuting homeomorphisms of a compact metric space $X$. Then there is a residual subset $\Sigma$ of any $\{T_1,\dotsc,T_p\}$-minimal subset of $X$ such that for all $x\in\Sigma$ there is a sequence $n_k\to\infty$ with
$T_i^{n_k}x\to x$ and $T_i^{-n_k}x\to x$
simultaneously for $i=1,\dotsc,p$.
\end{FW}

\begin{proof}
Let $M$ be the abelian group generated by $\{T_1,\dotsc,T_p\}$, which is thought of as a module over the ring $(\mathbb{Z},+,\cdot)$. Let $T_{p+1}=T_1^{-1}, \dotsc,T_{2p}=T_p^{-1}$. By restricting to a subset of $X$ if necessary, we can suppose without loss of generality that $(\varphi,M,X)$ is a minimal flow with phase map $\varphi\colon(g,x)\mapsto g(x)$.

Now for any $i\ge1$, let $\{U_{i,k}\,|\,k=1,2,\dotsc\}$ be an open cover of $X$ of diameter less than $1/i$. Then by Theorem~\ref{thm1.2}, it follows that for any $i\ge1$ and $m\ge1$,
$$
W_{i,m}:=\bigcup_{k=1}^\infty\bigcup_{n=m}^\infty\left(U_{i,k}\cap T_1^nU_{i,k}\cap\dotsm\cap T_p^nU_{i,k}\cap T_{p+1}^nU_{i,k}\cap\dotsm\cap T_{2p}^nU_{i,k}\right)
$$
is an open dense subset of $X$. Let $\Sigma=\bigcap_{i=1}^\infty\bigcap_{m=1}^\infty W_{i,m}$, which is a residual set in $X$. Clearly each point of $\Sigma$ satisfies the requirement of the recurrence theorem.
\end{proof}

\begin{que}\label{que2.6}%%%
If $(R,+,\cdot)=(\mathbb{Z}_+,+,\cdot)$, then $N_{T_1,\dotsc,T_l}(U)$ in Theorem~\ref{thm1.2} should be an $\textrm{IP}^*$-set; in other words, $N_{T_1,\dotsc,T_l}(U)$ should intersect every IP-set in $(\mathbb{Z}_+,+)$.
\end{que}

We note that if $X$ is a compact metric space, then by \cite[Theorem~2.16]{Fur} and homogeneity we can see that $N_{T_1,\dotsc,T_l}(U)$ is an IP$^*$-set in $\mathbb{Z}_+$. However, in the present non-metrizable situation, there does not need to exist a multiply recurrent point of $(T_1,\dotsc,T_l)$ (cf.~\cite{BK} for counterexamples).

\subsection{Birkhoff recurrent sets}%%%
Let $X$ be a compact Hausdorff space and let $(R,+,\cdot)$ be a semi-ring with $R\not=\{0\}$. As before, by a semiflow $\varphi\colon R\times X\rightarrow X$ or denoted $(\varphi,R,X)$, we mean that $\varphi(t,x)$ is such that:
\begin{itemize}
\item $\varphi(0,x)=x\ \forall x\in X$, i.e., $\varphi(0,\centerdot)=i_X$;
\item $\varphi^t=\varphi(t,\centerdot)\colon X\rightarrow X$, for any $t\in R$, is continuous; and

\item $\varphi(s+t,x)=\varphi(s,\varphi(t,x))$, i.e., $\varphi^{s+t}=\varphi^s\circ\varphi^t\; \forall s,t\in R$.
\end{itemize}
Note here that $(R,+)$ itself is an $R$-semimodule with $\e=0$.

\begin{defn}\label{def2.8}%%%
Given any nonzero elements $t_1,\dotsc,t_l\in R$, a subset $\Gamma$ of $R$ with $0\not\in\Gamma$ is call a \textit{set of $(t_1,\dotsc,t_l)$-recurrence} if for each minimal $(\varphi,R,X)$ and any open set $U\subseteq X, U\not=\emptyset$ we have
\begin{gather*}
\Gamma\cap N_{\varphi;t_1,\dotsc,t_l}(U)\not=\emptyset,\quad \textrm{where }N_{\varphi;t_1,\dotsc,t_l}(U)=\left\{t\in R\,|\,U\cap\varphi^{-tt_1}[U]\cap\dotsm\cap\varphi^{-tt_l}[U]\not=\emptyset\right\}.
\end{gather*}
See, e.g.,~\cite[Definition~2.6]{HKM} and \cite{HSY} for the special case that $R=\mathbb{Z}_+$ and $t_1=1,\dotsc,t_l=l$ but requiring $X$ a compact metric space there.
\end{defn}

Then we can easily obtain the following characterizations, in which the commutativity of $(R,+)$ plays a role.

\begin{thm}\label{thm2.9}%%%
Let $\Gamma\subset R$ and $F=\{0,t_1,\dotsc,t_l\}\subseteq R$ be any given with $t_i\not=0$ for $1\le i\le l$ and $0\not\in\Gamma$; then the following statements are pairwise equivalent.
\begin{enumerate}
\item[$(1)$] $\Gamma$ is a set of $(t_1,\dotsc,t_l)$-recurrence.
\item[$(2)$] Given any $(\varphi,R,X)$ and open cover $\mathfrak{U}=\{U_1,\dotsc,U_q\}$ of $X$, there is some $1\le j\le q$ with
    $\Gamma\cap N_{\varphi;t_1,\dotsc,t_l}(U_j)\not=\emptyset$.

\item[$(3)$] Given any partition $R=B_1\cup\dotsm\cup B_q$, one of these cells $B_j$ has the property that there exist $a\in R$ and $d\in\Gamma$ with $a+dF\subseteq B_j$.
\item[$(4)$] Given any syndetic set $E\subseteq (R,+)$, there are $a\in R$ and $d\in\Gamma$ with $a+dF\subseteq E$.
\end{enumerate}
\end{thm}

\begin{proof}
$(1)\Leftrightarrow(2)$ is obvious from Definition~\ref{def2.8} and Zorn's lemma. $(3)\Rightarrow(2)$ follows from the proof of Corollary~1.1.\ref{cor1.1-3}. Indeed, let $x\in X$ and define $B_j=\{t\,|\,\varphi(t,x)\in U_j\}$ for $j=1,\dotsc,q$. Then by (3), there is a $B_j$ such that $a+dF\subseteq B_j$ for some $a\in R$ and $d\in\Gamma$. That is to say, $\varphi(a,x)\in U_j$ and $\varphi(a+dt_i,x)\in U_j$ for $1\le i\le l$. Thus $d\in\Gamma\cap N_{\varphi;t_1,\dotsc,t_l}(U_j)\not=\emptyset$.

$(4)\Rightarrow(1)$ is trivial, since each point is a.p. for any minimal $(\varphi,R,X)$. Indeed, let $(\varphi,R,X)$ be minimal, $U$ an open nonempty subset of $X$, and $x\in U$. Then $E=\{t\,|\,\varphi(t,x)\in U\}$ is syndetic so that there are $a\in R$ and $d\in\Gamma$ such that $a+dF\subseteq E$. Thus $\varphi(a,x)\in U$ and $\varphi(a+dt_i,x)\in U$ for $1\le i\le l$. Whence $d\in\Gamma\cap N_{\varphi;t_1,\dotsc,t_l}(U)\not=\emptyset$.

$(2)\Rightarrow(3)$: By Note~\ref{thm1.2}.\ref{n1.2.6}, it follows from (2) that for any minimal $(\varphi,R,X)$ and nonempty open subset $U$ of $X$, $N_{\varphi;t_1,\dotsc,t_l}(U)$ is syndetic in $(R,+)$. Moreover, by Lemma~\ref{lem3.4}, some $B_j$ is a ``weak central set'' of $(R,+)$. Then by Lemma~\ref{lem3.3} and Footnote~5 there, we can find $a\in R$ and $d\in\Gamma$ such that $a+dF\subseteq B_j$.

$(1)\Rightarrow(4)$ follows from Theorem~\ref{thm3.6}; see Footnote~7 there. This proves Theorem~\ref{thm2.9}.
\end{proof}

This theorem is a generalization of \cite[Theorem~2.5]{HKM}. However, since in our situation $R$ is possibly uncountable, the standard topological version of Fustenberg's Correspondence Principle will need to be adapted for non-metrizable compact Hausdorff space.

Recall that a property is \textit{Ramsey} if for any set $\Gamma\subseteq R$ having this property and any partition $\Gamma=A\cup B$, at least one of $A$ and $B$ has this property (cf.~\cite{Fur}).

The following proposition is a direct corollary of Theorem~\ref{thm2.9}, which is a generalization of the special case $\mathbb{Z}$ (\cite[Proposition~7.2.4]{HSY} or \cite[Proposition~6.2]{HKM}).

\begin{prop}\label{prop2.10}%%%
Given any nonzero elements $t_1,\dotsc,t_l\in R$, the family of subsets of $(t_1,\dotsc,t_l)$-recurrence of $R$ has the Ramsey property.
\end{prop}

Further, as a result of the foregoing proposition, we can easily get the following that generalizes \cite[Corollary~6.3]{HKM}.

\begin{prop}\label{prop2.11}%%%
Let $t_1,\dotsc,t_l\in R$ be nonzero and let $\Gamma\subset R$ be a set of $(t_1,\dotsc,t_l)$-recurrence. Let $(\varphi,R,X)$ be minimal and $U$ a nonempty open subset of $X$. Then $\Gamma\cap N_{\varphi;t_1,\dotsc,t_l}(U)$ is a set of $(t_1,\dotsc,t_l)$-recurrence.
\end{prop}

Definition~\ref{def2.8} may be strengthened as follows:

\begin{defn}\label{def2.12}%%%
Given any integer $l\ge1$, a subset $\Gamma$ of $R$ with $0\not\in\Gamma$ is call a \textit{set of $l$-recurrence for $R$} if for each minimal $(\varphi,R,X)$, any nonzero elements $t_1,\dotsc,t_l\in R$ and any nonempty open set $U\subseteq X$ we have
$\Gamma\cap N_{\varphi;t_1,\dotsc,t_l}(U)\not=\emptyset$. Further, $\Gamma$ is called a \textit{set of multiple recurrence for $R$} if it is a set of $l$-recurrence for each $l\ge1$.
\end{defn}

Then one can obtain characterizations similar to Theorem~\ref{thm2.9}. Moreover, the multiple recurrence has the Ramsey property:

\begin{prop}\label{prop2.13}%%%
The family of subsets of multiple recurrence for $R$ has the Ramsey property.
\end{prop}

\begin{proof}
Suppose the contrary that $\Gamma=\Gamma_1\cup\Gamma_2$ is a set of multiple recurrence for $R$ such that $\Gamma_1$ and $\Gamma_2$ both are not. Then by $(1)\Leftrightarrow(2)$ of Theorem~\ref{thm2.9}, there are two minimal semiflows, say $(\varphi,R,X)$ and $(\psi,R,Y)$, nonzero elements $t_1,\dotsc,t_l, \tau_1,\dotsc,\tau_m\in R$, and open covers $\mathfrak{U}=\{U_1,\dotsc,U_q\}$ of $X$ and $\mathfrak{V}=\{V_1,\dotsc,V_p\}$ of $Y$ such that
\begin{gather*}
\Gamma_1\cap N_{\varphi;t_1,\dotsc,t_l}(U_i)=\emptyset\quad 1\le i\le q\intertext{and} \Gamma_2\cap N_{\psi;\tau_1,\dotsc,\tau_m}(V_j)=\emptyset\quad 1\le j\le p.
\end{gather*}
Now we consider the naturally induced system
$$
\varphi\times\psi\colon R\times X\times Y\rightarrow X\times Y; \quad (t,(x,y))\mapsto(\varphi(t,x),\psi(t,y)),
$$
and the open cover $\mathfrak{U}\times\mathfrak{V}=\{U_i\times V_j; 1\le i\le q, 1\le j\le p\}$ of $X\times Y$. Since $\Gamma$ is multiple recurrent for $R$, then
$$
d\in\Gamma\cap N_{\varphi\times\psi; t_1,\dotsc,t_l, \tau_1,\dotsc,\tau_m}(U_i\times V_j)\not=\emptyset
$$
for some pair $(i,j)$. This implies that
\begin{gather*}
U_i\cap\varphi^{-dt_1}[U_i]\cap\dotsm\cap\varphi^{-dt_l}[U_i]\not=\emptyset\intertext{and} V_j\cap\psi^{-d\tau_1}[V_j]\cap\dotsm\cap\psi^{-d\tau_m}[V_j]\not=\emptyset.
\end{gather*}
Then either $\Gamma_1\cap N_{\varphi;t_1,\dotsc,t_l}(U_i)\not=\emptyset$ or $\Gamma_2\cap N_{\psi;\tau_1,\dotsc,\tau_m}(V_j)\not=\emptyset$. This contradiction concludes the proof of Proposition~\ref{prop2.13}.
\end{proof}

However, the Ramsey property of sets of $l$-recurrence for $R$ keeps open.

%%%%%%%%%%%%%%%%%%%%%%%%%%%%%%%%%%%%%%%%%%%%%%%%%%%%%%%%%%%%
\section{Weak central sets and van der Waerden theorem}\label{sec3}%%%
This section will be mainly devoted to proving the van der Waerden theorem (Theorem~\ref{thm1.1}) based on the Regional Multiple Recurrence Theorem (Theorem~\ref{thm1.2}) and Weak Central Sets of discrete semigroups introduced below.

Moveover we will consider van der Waerden subset of any semi-module in this section; see Definition~\ref{def3.5} below.
\subsection{Weak central sets of semigroups}%%%
The following concept is a slight generalization of Furstenberg's central sets of the natural number semigroup $(\mathbb{N},+)$ (cf.~\cite[Definition~8.3]{Fur}).

\begin{defn}\label{def3.1}%%%
A subset $S$ of a discrete additive semigroup $(G,\pmb{+})$ is referred to as a \textit{weak central set} of $(G,\pmb{+})$, where $G$ is not necessarily abelian, if
\begin{itemize}
\item there exists a semiflow $\varphi\colon G\times X\rightarrow X$ on a compact Hausdorff space $X$,
\item a point $x\in X$ and an a.p. point $y\in X$ of $(\varphi,G,X)$ that is \textit{weakly proximal to $x$} under $\varphi$ in the sense that $\mathrm{cls}_X^{}G_\varphi[x]\cap\mathrm{cls}_X^{}G_\varphi[y]\not=\emptyset$,
\item and there is an open neighborhood $U$ of $y$,
\end{itemize}
such that $S=N_\varphi(x,U):=\left\{g\in G\,|\,\varphi(g,x)\in U\right\}$.
\end{defn}

It should be noticed here that comparing our orbital proximality with Furstenberg's central sets, $(x,y)$ is not necessarily to be a classical proximal pair for $(\varphi,G,X)$ (in the sense that there exists a net $t_\theta\in G$ with $\varphi(t_\theta,x)\to z, \varphi(t_\theta,y)\to z$ for some $z\in X$) and $X$ is not necessarily metrizable in our Definition~\ref{def3.1}.

Clearly, $G$ is itself a weak central set of $(G,\pmb{+})$ by considering a semiflow on a singleton set. If $x$ is itself a.p. for $(\varphi,G,X)$, then $N_\varphi(x,U)$ is a syndetic weak central set of $(G,\pmb{+})$. Of course, a weak central set need not be syndetic in general.

Let $(G,\pmb{+})$ be a semigroup not necessarily abelian. Recall that a subset $S$ of $G$ is called \textit{piecewise syndetic} if
one can find a syndetic subset $S^\prime$ of $G$ such that for any finite subset $A$ of $S^\prime$, there is some $g_A^{}\in S$ with $A\pmb{+}g_A^{}\subset S$.
We can easily check that if $S\subseteq G$ is piecewise syndetic in $G$, then there exists a finite set $F\subseteq G$ such that for any finite set $A\subseteq G$, it holds that $\bigcup_{f\in F}L_f^{-1}S\supseteq A\pmb{+}g_A^{}$ for some $g_A^{}\in S$. Here $L_f\colon G\rightarrow G$ by $g\mapsto f\pmb{+}g$, for each $f\in G$.

Next we will present some basic combinatorial properties of weak central sets, which are generalizations of properties of Furstenberg central sets (cf.,~e.g.,~\cite[Proposition~8.9]{Fur}).

\begin{lem}\label{lem3.2}%%%
Let $S$ be a weak central set of a semigroup $(G,\pmb{+})$ not necessarily abelian. Then it is piecewise syndetic in $(G,\pmb{+})$. So if $G$ is a group, then $S\pmb{-}S$ contains a syndetic subset of $G$.
\end{lem}

\begin{proof}
By Def.~\ref{def3.1}, there exist a $(\varphi,G,X)$, a point $x\in X$, an a.p. point $y\in X$ and a neighborhood $U$ of $y$ such that $\overline{G_\varphi[x]}\cap \overline{G_\varphi[y]}\not=\emptyset$ and $S=N_\varphi(x,U)$. Since $y$ is a.p. for $(\varphi,G,X)$, so $N_\varphi(y,U)$ is syndetic in $G$ and $\overline{G_\varphi[y]}\subseteq\overline{G_\varphi[x]}$. Thus, for any finite subset $A$ of $N_\varphi(y,U)$, one can take some $g\in G$ so that $g(x)$ approaches $y$ arbitrarily. Hence $\varphi(a,g(x))\in U$ for each $a\in A$. Thus $A\pmb{+}g\subseteq S$. This completes the proof.
\end{proof}

Theorem~\ref{thm1.2} proved in $\S\ref{sec2}$ will play a role in the proof of the following lemma.

\begin{lem}\label{lem3.3}%%%
Let $S$ be a weak central set of a semi-module $(M,\pmb{+})$ over a semi-ring $(R,+,\cdot)$. Then for any finite set $F\subseteq M$ one can find a syndetic subset $N_F$ of $(R,+)$, which contains an IP-set, such that for each $d\in N_F$ there exists an element $a\in S$ with $a\pmb{+}dF\subseteq S$.
\end{lem}

\begin{proof}
Let $(\varphi,M,X)$ be a semiflow on a compact Hausdorff space $X$, $y\in X$ an a.p. point of $(\varphi,M,X)$ weakly proximal to some point $x\in X$, $U$ an open neighborhood of $y$ in $X$ such that $S=\{g\in M\,|\,\varphi(g,x)\in U\}$.

Given any finite subset $F$ of $M$, write $F=\{T_1,\dotsc,T_l\}$. Since $y$ is an a.p. point of $(\varphi,M,X)$, the orbit closure $\mathrm{cls}_X^{}M_\varphi[y]$ of $y$ is $\varphi$-minimal in $X$. For simplicity, set $Y=\mathrm{cls}_XM_\varphi[y]$.
Then by applying Theorem~\ref{thm1.2} with the minimal
$\varphi\colon M\times Y\rightarrow Y$,
it follows that:
\begin{itemize}
\item There exists some syndetic subset $N_F$ of $(R,+)$, which contains an IP-set,\footnote{This can be proved by a standard construction of Furstenberg (cf.~\cite[p.~35]{Fur}).} such that for any $d\in N_F$, one can find a point $y^\prime=y^\prime(d)\in U\cap Y$ with $\varphi(dT_i,y^\prime)\in U$ simultaneously for $i=1,2,\dots,l$.\footnote{If $(M,\pmb{+})=(R,+)$ and $\Gamma\subseteq R$ is a set of $(t_1,\dotsc,t_l)$-recurrence as in Definition~\ref{def2.8}, then $\Gamma\cap(N_F\setminus\{0\})\not=\emptyset$ with $(t_1,\dotsc,t_l)$ in place of $(T_1,\dotsc,T_l)$. Whence we can require $d\in\Gamma$.}
\end{itemize}
In addition, since $x$ is weakly proximal to $y$ under $\varphi$ and $Y$ minimal for $(\varphi,M,X)$, $Y\subseteq\mathrm{cls}_X^{}M_\varphi[x]$ and then one can find some $a\in S$ such that $\varphi(a,x)\in U$ is so close to $y^\prime$ that $\varphi(dT_i\pmb{+}a,x)\in U$ for all $1\le i\le l$.

This thus completes the proof of Lemma~\ref{lem3.3}.
\end{proof}

According to Notes~\ref{thm1.2}.1, if we consider $a\pmb{+}Fd$ instead of $a\pmb{+}dF$, then the statement of Lemma~\ref{lem3.3} still holds for any right semi-modules.

Comparing with Furstenberg's topological discussion of the case $M=\mathbb{Z}$ or $\mathbb{N}$ (cf.~\cite[Proposition~8.9]{Fur}), in the proof of Lemma~\ref{lem3.3} we have overcome the following two obstructions by using weak central set of a semi-ring and Theorem~\ref{thm1.2} proved before:
\begin{itemize}
\item Since our underlying space $X$ is not necessarily a (compact) metric space, we cannot find a Lebesgue number $\varepsilon$ and there is no the classical proximality here.
\item For our situation here, there exists no an applicable pointwise Multiple Birkhoff Recurrence Theorem (cf.~\cite{FW}, also \cite[Theorem~2.6]{Fur}).
\end{itemize}
As a central set in $\mathbb{N}$ must be a weak central set, our Lemma~\ref{lem3.3} is a generalization of Furstenberg~\cite[Proposition~8.9]{Fur}.

The following lemma is a standard result by Fursenberg's correspondence principle, which is also valid for any discrete semigroup with a zero element $\e$.

\begin{lem}\label{lem3.4}%%%
Let $(M,\pmb{+})$ be a semi-module over a semi-ring $(R,+,\cdot)$. Then in any finite partition $M=B_1\cup\dotsm\cup B_q$ with $B_j\not=\emptyset$ for each $j=1,\dotsc,q$, one of the sets $B_j$ is a weak central set of $(M,\pmb{+})$.
\end{lem}

\begin{proof}
Over the compact Hausdorff space $X:=\{1,\dotsc,q\}^{M}$ where $\{1,\dotsc,q\}$ is discrete and each $x\in X$ is thought of as a function $x(\centerdot)\colon M\rightarrow\{1,\dotsc,q\}$, we form the semiflow
\begin{gather*}
\varphi\colon M\times X\rightarrow X\quad \textrm{or}\quad (\varphi,M,X)
\end{gather*}
in the following ways:\footnote{If $M$ is uncountable and equipped with a locally compact second countable Hausdorff topology, then $\varphi(g,x)$ is not jointly continuous with respect to $g\in M$ and $x\in X$, even not Borel measurable. Here the discrete topology of $M$ enables us to employ Furstenberg's correspondence principle.}
\begin{gather*}
(g,x(\centerdot))\mapsto\varphi(g,x(\centerdot))=x(\centerdot\pmb{+}g),\quad \forall x(\centerdot)\colon M\rightarrow\{1,\dotsc,q\}\textrm{ and }g\in M;
\end{gather*}
here $x(\centerdot\pmb{+}g)\colon M\rightarrow\{1,\dotsc,q\}$ is given by $t\mapsto x(t\pmb{+}g)$ for any $t\in M$.
Let $\xi(\centerdot)\colon M\rightarrow\{1,\dotsc,q\}$ be defined by
\begin{gather*}
\xi(g)=i\Leftrightarrow g\in B_i,\quad  i=1,\dotsc,q\textrm{ and }g\in M.
\end{gather*}
Let $\eta(\centerdot)\colon M\rightarrow\{1,\dotsc,q\}$ be an a.p. point of $(\varphi,M,X)$ by Zorn's lemma. Since the $q$ clopen blocks
\begin{gather*}
[i]_{\e}=\{x(\centerdot)\colon M\rightarrow\{1,2,\dotsc,q\}\,|\,x(\e)=i\},\quad i=1,2,\dotsc,q
\end{gather*}
form an open cover of $X$ where $\e$ is the zero element of $(M,\pmb{+})$, hence some block $[j]_{\e}$ is an open neighborhood of $\eta(\centerdot)$. Write
$S=\left\{g\in M\,|\,\varphi(g,\xi(\centerdot))\in[j]_{\e}\right\}$ which is nonempty for $B_j\not=\emptyset$.
Then $S$ is a weak central set of $(M,\pmb{+})$ by Def.~\ref{def3.1} and moreover $S=B_j$. This proves Lemma~\ref{lem3.4}.
\end{proof}

It should be noticed that although $\{1,\dotsc,q\}^{M}$ is a compact Hausdorff space, yet it is a metric space when $M$ is uncountable.
Because of this reason, we cannot employ the classical pointwise topological multiple recurrence theorem (\cite[Theorem~1.4]{FW} and \cite[Theorem~2.6]{Fur}) or the measure-theoretic multiple recurrence theorem of Furstenberg (\cite{Fur}) that are only for dynamical systems over compact metric spaces or standard Borel spaces.

\subsection{Van der Waerden-type theorems}\label{sec3.2}%%%
Now we are able to readily prove the van der Waerden theorem of semi-modules over discrete semi-rings.

\begin{proof}[\textbf{Proof of Theorem~\ref{thm1.1}}]
The statement of Theorem~\ref{thm1.1} follows at once from Lemma~\ref{lem3.3} together with Lemma~\ref{lem3.4}.
\end{proof}

Note that applying Lemmas~\ref{lem3.4} and \ref{lem3.3} with $(R,+)$ instead of $(M,\pmb{+})$ we can see `$(2)\Rightarrow(3)$' in Theorem~\ref{thm2.9}.

Inspired by Furstenberg's concept---VDW-set in $\mathbb{Z}^m$~\cite[$\S2.4$]{Fur}, we now introduce this notion for semi-modules.

\begin{defn}\label{def3.5}%%%
Given any $(R,+,\cdot)$-semimodule $(M,\pmb{+})$, we say that a subset $W\subseteq M$ is a \textit{van der Waerden-set} (\textit{vdW-set}) if for every finite set $F\subseteq M$ we can find a syndetic subset $D_F$ of $(R,+)$ such that for each $d\in D_F$ there is some $a\in M$ with $a\pmb{+}dF\subseteq W$.
\end{defn}

Here $D_F$ is thought of as the set of ``common differences'' of the configuration $F$. The following is another consequence of Theorem~\ref{thm1.2}.

\begin{thm}\label{thm3.6}%%%
If $S$ is a syndetic subset of a semi-module $(M,\pmb{+})$ over a semi-ring $(R,+,\cdot)$, then $S$ is a \textit{vdW}-set.
\end{thm}

\begin{note}
Since for a general semigroup $(G,\pmb{+})$ and a finite subset $K$ of $G$
$$
(K\pmb{+}a)\cap S\not=\emptyset\ \forall a\in G\not\Rightarrow G={\bigcup}_{k\in K}(S\pmb{-}k),\quad \textrm{for here }`\pmb{-}\textrm{'}\textrm{ makes no sense}!
$$
the proof idea of \cite[Proposition~2.8]{Fur} for $R=\mathbb{Z}^m$ by van der Waerden theorem is invalid here. Differently we will prove this result by using our multiple recurrence theorem and Furstenberg's correspondence principle.
\end{note}

\begin{proof}
Let $X=\{0,1\}^M=\{x_\centerdot\colon M\rightarrow\{0,1\}\}$ with the product topology and consider the shift semiflow $\varphi\colon M\times X\rightarrow X$ given by
\begin{gather*}
g(x_\centerdot)=x_{\centerdot\pmb{+}g}\quad \forall g\in M\textrm{ and }x_\centerdot\in X.
\end{gather*}
Define a point $\xi_\centerdot$ in $X$ as follows
\begin{gather*}
\xi_g=1\Leftrightarrow g\in S,\quad \forall g\in M.
\end{gather*}
Since the orbit closure $\mathrm{cls}_X^{}M[\xi_\centerdot]$ is $M$-invariant, we can find a minimal point $y_\centerdot\in\mathrm{cls}_X^{}M[\xi_\centerdot]$ for $(\varphi,M,X)$; i.e., $\mathrm{cls}_X^{}M[y_\centerdot]$ is minimal for $(\varphi,M,X)$ such that $y_\centerdot\in\mathrm{cls}_X^{}M[y_\centerdot]$. As $S$ is ``syndetic'' in $M$ associated to some finite subset, say $K=\{g_1,\dotsc,g_k\}\subseteq M$, it follows that there exists some element $\hat{g}\in M$ such that
\begin{gather*}
\xi_{g_1\pmb{+}\hat{g}}=y_{g_1}, \dotsc,\xi_{g_k\pmb{+}\hat{g}}=y_{g_k}\quad \textrm{and then  }1\in\{y_{g_1},\dotsc,y_{g_k}\}
\end{gather*}
Without loss of generality, assume $y_{g^\prime}=1$. Since the cylinder set
\begin{gather*}
U=\{x_\centerdot\in X\,|\,x_{g^\prime}=1\}
\end{gather*}
is an open neighborhood of $y_\centerdot$ in $X$, then by Theorem~\ref{thm1.2}, it follows that:
\begin{itemize}
\item For any finite set $F\subseteq M$ there exists a syndetic set $D_F\subseteq (R,+)$ such that for any $d\in D_F$ one can choose some point $z_\centerdot\in U\cap\mathrm{cls}_X^{}M[\xi_\centerdot]$ with $z_\centerdot=1$ on $dF$. \footnote{If $(M,\pmb{+})=(R,+)$ and $\Gamma$ is a set of $(t_1,\dotsc,t_l)$-recurrence, then we can additionally require that $\Gamma\cap(D_F\setminus\{0\})\not=\emptyset$ for any $F=\{t_1,\dotsc,t_l\}\subseteq R$. This is what needed for proving `$(1)\Rightarrow(4)$' in Theorem~\ref{thm2.9}.}
\end{itemize}
This implies that there is some $a\in M$ so that $\xi_{|a\pmb{+}dF}\equiv 1$. Hence $a\pmb{+}dF\subseteq S$.

This thus proves Theorem~\ref{thm3.6}.
\end{proof}

We now present an application of our van der Waerden-type result Theorem~\ref{thm1.1}.

\begin{thm}\label{thm3.7}%%%
Let $\Lambda$ be a compact metric space, $(M,\pmb{+})$ a semi-module over a semi-ring $(R,+,\cdot)$, and let $f\colon M\rightarrow\Lambda$ be an arbitrary function. Then for any $\varepsilon>0$ and finite set $F\subseteq M$, we can find a syndetic subset $D$ of $(R,+)$ such that for any $d\in D$ there exists an $a\in M$
so that $f(a\pmb{+}dF)$ is a set of diameter less than $\varepsilon$ in $\Lambda$.
\end{thm}

\begin{proof}
This follows from Theorem~\ref{thm1.1} by an argument same as that of \cite[Theorem~2.9]{Fur} for $G=\mathbb{N}^m$. So we omit the details here.
\end{proof}

Theorem~\ref{thm3.7} implies that if $f\colon G\rightarrow\mathbb{R}$ is a bounded function on a semigroup $G$ and $\varepsilon>0$, then there will be three elements in ``arithmetic progression'' $a, a+h, a+2h$ in $G$ such that $|f(a)-f(a+h)|<\varepsilon$ and $|f(a+h)-f(a+2h)|<\varepsilon$.

\begin{cor}%%%
Let $X$ be an arbitrary space and $T_1,\dotsc,T_l$ commuting transformations of $X$ to itself, and let $\phi_1,\dotsc,\phi_l$ be any functions from $X$ to the unit circle $\mathbb{T}\colon |\phi_i(x)|=1$. Then for any $\varepsilon>0$ and any $m\in\mathbb{N}$, we can find a syndetic subset $D$ of $\mathbb{Z}_+$ such that to any $n\in D$ there is an $x_n\in X$ to satisfy the inequalities:
\begin{gather*}
\left|{\phi_i(T_1^{k_1n}T_2^{k_2n}\dotsm T_l^{k_ln}x_n)}^{k_0n}-1\right|<\varepsilon\quad \forall i=1,\dotsc,l
\end{gather*}
for all $(k_0,k_1,\dotsc,k_l)\in{\mathbb{Z}}_+^{l+1}$ with $0\le k_j\le m$ for each $j=0,1,\dotsc,l$.
\end{cor}

\begin{proof}
Given any point $x_0\in X$, we now define the vector-valued function $\Phi\colon\mathbb{Z}_+^{l+1}\rightarrow\mathbb{T}^l$ by setting
\begin{gather*}
\Phi(n_0,n_1,\dotsc,n_l)=\left(\begin{matrix}{\phi_1(T_1^{n_1}T_2^{n_2}\dotsm T_l^{n_l}x_0)}^{n_0}\\{\phi_2(T_1^{n_1}T_2^{n_2}\dotsm T_l^{n_l}x_0)}^{n_0}\\\vdots\\{\phi_l(T_1^{n_1}T_2^{n_2}\dotsm T_l^{n_l}x_0)}^{n_0}\end{matrix}\right)\quad \forall (n_0,n_1,\dotsc,n_l)\in\mathbb{Z}_+^{l+1}.
\end{gather*}
Choosing a metric on $\mathbb{T}^l$ by $\|(z_1,\dotsc,z_l)-(y_1,\dotsc,y_l)\|=\max_{1\le i\le l}|z_i-y_i|$, we proceed by Theorem~\ref{thm3.7} with $(M,\pmb{+})=(\mathbb{Z}_+^{l+1},\pmb{+})$ over the semi-ring $(\mathbb{Z}_+,+,\cdot)$ to find a ``syndetic'' subset $D$ of $\mathbb{Z}_+$ associated to $\varepsilon>0$ and the $(l+1)$-dimensional cube
$$
F=\left\{(k_0,k_1,\dotsc,k_l)\in\mathbb{Z}_+^{l+1}\,\big{|}\,0\le k_j\le m, 0\le j\le l\right\}
$$
such that for any $n\in D$, one can find some element $a=(n_0,n_1,\dotsc,n_l)\in\mathbb{Z}_+^{l+1}$ with
\begin{gather*}
\textrm{diam}\left(\Phi(a\pmb{+}nF)\right)<\varepsilon.
\end{gather*}
If we now set
$$
x_n=T_1^{n_1}T_2^{n_2}\dotsm T_l^{n_l}x_0
$$
and compare the values of $\Phi$ at the vertices of the homothetic copy $a\pmb{+}nF$ of the cube $F$, we can see that for any $i=1,2,\dotsc,l$ and all $(k_0,k_1,\dotsc,l_l)\in F$,
\begin{gather*}
\left|{\phi_i(T_1^{k_1n}\dotsm T_l^{k_ln}x_n)}^{n_0}-{\phi_i(T_1^{k_1n}\dotsm T_l^{k_ln}x_n)}^{n_0+k_0n}\right|<\varepsilon\quad \textrm{or}\quad\left|{\phi_i(T_1^{k_1n}\dotsm T_l^{k_ln}x_n)}^{k_0n}-1\right|<\varepsilon.
\end{gather*}
The proof is completed.
\end{proof}

It should be noted here that this result is just a strengthen of a theorem of Furstenberg~\cite[Theorem~2.13]{Fur}.

For convenience we now introduce a minor technical condition:
\begin{itemize}
\item[$(\ast)$] We shall say $(R,+,\cdot)$ is an \textit{$*$-semiring} if for any $s_1,\dotsc,s_k\in R$ where $1\le k<\infty$,
\begin{gather*}
\mathfrak{N}_{s_1}\cup\dotsm\cup\mathfrak{N}_{s_k}\not=R,\quad \textrm{where }\mathfrak{N}_s:=\{t\in R\,|\,s+t=0\}.
\end{gather*}
\end{itemize}
It is easy to see that if $(R,+,\cdot)$ is an infinite ring (therefore $(R,+)$ is an additive group), then it is an $\ast$-semiring; moreover, if $(R,+)$ is cancelable infinite, then condition $(*)$ holds.

Clearly $(\mathbb{Z}_+^n,+,\cdot)$ and $(\mathbb{R}_+^n,+,\cdot)$ both are commutative $*$-semirings with $0=(0,\dotsc,0)$ and $1=(1,\dotsc,1)$ where
\begin{gather*}
(x_1,\dotsc,x_n)+(y_1,\dotsc,y_n)=(x_1+y_1,\dotsc,x_n+y_n)\intertext{and}(x_1,\dotsc,x_n)\cdot(y_1,\dotsc,y_n)=(x_1y_1,\dotsc,x_ny_n)
\end{gather*}
for all $(x_1,\dotsc,x_n)$ and $(y_1,\dotsc,y_n)\in\mathbb{R}_+^n$. On the other hand, let $\mathbb{R}_+^{n\times n}$ be the set of all real $n\times n$ nonnegative matrices; then $(\mathbb{R}_+^{n\times n},+,\circ)$ is an $*$-semiring cancelable noncommutative.

Clearly if $S$ is a syndetic subset of an $\ast$-semiring $(R,+,\cdot)$, then one can always find some element $d\in S$ with $d\not=0$ because for all $t\in R\setminus(\mathfrak{N}_{s_1}\cup\dotsm\cup\mathfrak{N}_{s_k})$ where $K=\{s_1,\dotsc,s_k\}$, $(K+t)\cap S\not=\emptyset$.

Now we let $(M,\pmb{+})$ be a semi-module over a $*$-semiring $(R,+,\cdot)$. The following variation of Theorem~\ref{thm1.1} is just the ``finitary formulation'' in which one considers partitions of large sets (but finite if $G$ is countable).

\begin{thm}\label{thm3.9}%%%
Let $M_1\subset M_2\subset\dotsm\subset M_n\subset\dotsm$ with $M=\bigcup_{n\ge1}M_n$; and let $F$ be a finite subset of $M$ and $q\in\mathbb{N}$. Then there exists a number $N=N(q,F)$ such that whenever $n\ge N$ and $M_n=B_1\cup\dotsm\cup B_q$ is a partition into $q$ sets, one of these $B_j$ contains a homothetic copy of $F$, $a\pmb{+}dF$, where $a\in M$ and $d\in R$ with $d\not=0$.
\end{thm}

\begin{proof}
We think of the partition $M_n=B_1\cup\dotsm\cup B_q$ as a function $\xi_n$ from $M_n$ to $\{1,2,\dotsc,q\}$ defined by $\xi_n(g)=j\Leftrightarrow g\in B_j$ for $j=1,\dotsc,q$, for any $n\ge1$. Suppose with $n\to+\infty$ we can find partitions for which no homothetic copy of $F$ is contained in any cell $B_j$. Consider the corresponding function from $M_n$ to $\{1,2,\dotsc,q\}$ and extend it arbitrarily onto $M$ to obtain a point $\xi_n\in\{1,2,\dotsc,q\}^M$. Take any limit point of $\{\xi_n\}$, say $\xi$, and apply Theorem~\ref{thm1.1} to the corresponding partition
\begin{gather*}
M=\{\xi=1\}\cup\dotsm\cup\{\xi=q\}.
\end{gather*}
It follows that $\xi$ is constant on some homothetic copy $a\pmb{+}dF$ where $a\in M$ and $d\in R$ with $d\not=0$. This set $a\pmb{+}dF$ is contained in $M_n$ as soon as $n$ is large, and moreover $\xi_n$ agrees with $\xi$ on $a\pmb{+}dF$ for some large $n$. But this clearly leads to a contradiction that proves the theorem.
\end{proof}

This theorem generalizes obviously the classical result \cite[Theorem~2.10]{Fur} for $\mathbb{N}^m$. It is clear that since $M$ need not be countable here, this version does not imply our previous formulation Theorem~\ref{thm1.1} in general.

\subsection{An open problem}
Finally we conclude our discussion with the following open question closely related to our topic discussed before:

\begin{con}[Schur-Brauer version of van der Waerden's theorem]\label{con3.10}%%%
Let $M=B_1\cup\dotsm\cup B_q$ be any finite partition of a semi-module $(M,\pmb{+})$ over a semi-ring $(R,+,\cdot)$. One of the sets $B_j$ has the property that if $F$ is any finite subset of $R$, then there are elements $a\in M$ and $b\in B_j$ with $b\not=\e$ such that $a\pmb{+}Fb\subseteq B_j$.
\end{con}

Notice here that `$F\subseteq R$' in Conjecture~\ref{con3.10}, but not `$F\subseteq G$' as in Theorem~\ref{thm1.1} there. If $G=\mathbb{Z}_+$, this is just the Schur-Brauer theorem.
%%%%%%%%%%%%%%%%%%%%%%%%%%%%%%%%%%%%
\section*{\textbf{Acknowledgments}}%
%%Particularly he is grateful to the anonymous reviewers for their comments.
This work was partly supported by National Natural Science Foundation of China (Grant Nos. 11431012, 11271183) and PAPD of Jiangsu Higher Education Institutions.

%%%%%%%%%%%%%%%%%%%%%%%%%%%

\end{document}